\tikzstyle{vertex}=[circle, draw, inner sep=0pt, minimum size=6pt]
\def\frk{\frak}               % font for "Fraktur"
\def\Phi{{\frk n}}
\def\Phi{{\frk N}}
\def\opn#1#2{\def#1{\operatorname{#2}}} % to make operators
\opn\chara{char} \opn\length{\ell} \opn\pd{pd} \opn\rk{rk}
\opn\projdim{proj\,dim} \opn\injdim{inj\,dim} \opn\rank{rank}
\opn\depth{depth} \opn\grade{grade} \opn\height{height}
\opn\embdim{emb\,dim} \opn\codim{codim}
\opn\Tr{Tr} \opn\bigrank{big\,rank}
\opn\superheight{superheight}\opn\lcm{lcm}
\opn\trdeg{tr\,deg}%\emph{
\opn\reg{reg} \opn\lreg{lreg} \opn\ini{in} \opn\lpd{lpd}
\opn\size{size}
\opn\div{div} \opn\Div{Div} \opn\cl{cl} \opn\Cl{Cl}
\opn\Spec{Spec} \opn\Supp{Supp} \opn\supp{supp} \opn\Sing{Sing}
\opn\Ass{Ass} \opn\Min{Min}
\opn\Ann{Ann} \opn\Rad{Rad} \opn\Soc{Soc}
\opn\Im{Im} \opn\Ker{Ker} \opn\Coker{Coker} \opn\Am{Am}
\opn\Hom{Hom} \opn\Tor{Tor} \opn\Ext{Ext} \opn\End{End}
\opn\Aut{Aut} \opn\id{id}
\opn\nat{nat}
\opn\pff{pf}%   \pf exists already
\opn\Pf{Pf} \opn\GL{GL} \opn\SL{SL} \opn\mod{mod} \opn\ord{ord}
\opn\Gin{Gin} \opn\Hilb{Hilb}
\opn\aff{aff} \opn\con{conv} \opn\relint{relint} \opn\st{st}
\opn\lk{lk} \opn\cn{cn} \opn\core{core} \opn\vol{vol}
\opn\link{link} \opn\star{star}
\opn\gr{gr}
\def\pot#1#2{#1[\kern-0.28ex[#2]\kern-0.28ex]}
\opn\dirlim{\underrightarrow{\lim}}
\opn\inivlim{\underleftarrow{\lim}}
\def\Implies{\ifmmode\Longrightarrow \else
        \unskip${}\Longrightarrow{}$\ignorespaces\fi}
\def\implies{\ifmmode\Rightarrow \else
        \unskip${}\Rightarrow{}$\ignorespaces\fi}
\def\iff{\ifmmode\Longleftrightarrow \else
        \unskip${}\Longleftrightarrow{}$\ignorespaces\fi}
\newcommand{\vertex}{\node[vertex]}
\newtheorem{Theorem}{Theorem}[section]
\newtheorem{Lemma}[Theorem]{Lemma}
\newtheorem{cor}[Theorem]{Corollary}
\newtheorem{Proposition}[Theorem]{Proposition}
\newtheorem{Remark}[Theorem]{Remark}
\newtheorem{Definition}[Theorem]{Definition}
\let\epsilon\varepsilon
\let\phi=\varphi
\let\kappa=\varkappa
\def\qed{\ifhmode\textqed\fi
      \ifmmode\ifinner\quad\qedsymbol\else\dispqed\fi\fi}
\def\textqed{\unskip\nobreak\penalty50
       \hskip2em\hbox{}\nobreak\hfil\qedsymbol
       \parfillskip=0pt \finalhyphendemerits=0}
\def\dispqed{\rlap{\qquad\qedsymbol}}
\opn\dis{dis}
\def\pnt{{\raise0.5mm\hbox{\large\bf.}}}
\opn\Lex{Lex}
\begin{document}
\title{On Algebraic Characterization of SSC of the Jahangir's Graph $\mathcal{J}_{n,m}$}

\author{Zahid Raza$^1$, Agha Kashif$^2$, Imran Anwar$^3$.}
 \thanks{ {\bf 1.} University of Sharjah, College of Sciences, Department of Mathematics, United Arab Emirates.
        \\{\bf 2.} University of Management and Technology, Lahore, Pakistan.
         \\ {\bf 3.} ASSMS, Government College University, Lahore, Pakistan.
         }
\email {zraza@sharjah.ac.ae, kashif.khan@umt.edu.pk,aghakashifkhan@hotmail.com, iimrananwar@gmail.com
 }

 \maketitle
\begin{abstract}
In this paper, some algebraic and combinatorial characterizations of the spanning simplicial complex $\Delta_s(\mathcal{J}_{n,m})$ of  the Jahangir's graph $\mathcal{J}_{n,m}$ are explored. We show that $\Delta_s(\mathcal{J}_{n,m})$ is pure, present the formula for $f$-vectors associated to it and hence deduce a recipe for computing the Hilbert series of the Face ring $k[\Delta_s(\mathcal{J}_{n,m})]$. Finaly, we show that the face ring of $\Delta_s(\mathcal{J}_{n,m})$ is Cohen-Macaulay and give some open scopes of the current work.

\end{abstract}
\noindent
 {\it Key words: } Simplicial Complexes, Spanning Trees, Face Ring, Hilbert Series, $f$-vectors, Cohen Macaulay. \\
 {\it 2000 Mathematics Subject Classification}: Primary 13P10, Secondary 13H10, 13F20, 13C14.
\section{introduction}

The concept of spanning simplicial complex (SSC) associated with the edge set of a simple finite connected graph is introduced by Anwar, Raza and Kashif in \cite{ARK}. They revealed some important algebraic properties of SSC of a unicyclic graph. Kashif, Raza and Anwar further established the theory and explored algebraic characterizations of some more general classes of n-cyclic graphs in \cite{KRA1, KRA2}. The problem of finding the SSC for a general simple finite connected graph is not an easy task to handel. Recently in \cite{ZSG} Zhu, Shi and Geng discussed the SSC of another class $n-$cyclic graphs with a common edge.
\\In this article, we discuss some algebraic and combinatorial properties of the spanning simplicial complex $\Delta_s(\mathcal{J}_{n,m})$ of a certain class of {\em cyclic graphs, $\mathcal{J}_{n,m}$}. For simplicity, we fixed $n=2$ in our results. Here, $\mathcal{J}_{n,m}$ is the class of Jahangir's graph defined in \cite{LJM} as follows:
\\{\em The Jahangir's graph $J_{n,m}$, for $m \ge 3$, is a graph on $nm +1$ vertices i.e., a graph consisting of a cycle $C_{nm}$ with one additional vertex which is adjacent to $m$ vertices of $C_{nm}$ at distance $n$ to each other on $C_{nm}$.}
\\More explicitly, it consists of a cycle $C_{nm}$ which is further divided into $m$ consecutive cycles $C_i$ of equal length such that all these cycles have one vertex common and every pair of consecutive cycles has exactly one edge common. For example the graph $\mathcal{J}_{2,3}$ is given as:\\
\[\begin{tikzpicture}

	%\draw (0,0) circle (2cm);
	\vertex[fill] (1) at (0,0)  {};
	\vertex[fill] (2) at (0,-1)  {};
	\vertex[fill] (3) at (0.8667,.5)  {};
	\vertex[fill] (4) at (-0.8667,.5)  {};
    \vertex[fill] (5) at (0,1)  {};
	\vertex[fill] (6) at (-0.8667,-.5)  {};
	\vertex[fill] (7) at (0.8667,-.5)  {};

\draw (30:0cm) -- node[below] {$e_{21}$} (30:1cm)
arc  (30:60:1cm) node[above]{$e_{13}$} arc (60:90:1cm)
arc  (90:120:1cm) node[above]{$e_{12}$} arc (120:150:1cm)
%--node[auto] {$e_1$} (150:0cm)
arc  (150:180:1cm) node[left]{$e_{33}$} arc (180:210:1cm)
arc  (210:240:1cm) node[left]{$e_{32}$} arc (240:270:1cm)
arc  (270:300:1cm) node[right]{$e_{23}$} arc (300:330:1cm)
arc  (330:360:1cm) node[right]{$e_{22}$} arc (0:30:1cm)
(150:0cm) -- node[above] {$e_{11}$} (150:1cm)
(270:0cm) -- node[left] {$e_{31}$} (270:1cm)
;
%--node[auto] {$e_1$} (270:0cm);
\end{tikzpicture}\]
\begin{center}
The graph $\mathcal{J}_{2,3}$\\
\tiny{Figure 1}
\end{center}

 We fix the edge set of $\mathcal{J}_{2,m}$ as follows:
 \begin{equation}
 E=\{e_{11},e_{12},e_{13},e_{21},e_{22},e_{23},\hdots,e_{m1},e_{m2},e_{m3}\}.
 \end{equation}
 Here, $\{e_{k1},e_{k2},e_{k3},e_{(k+1)1}\}$ is the edge set of the cycle $C_k$ for $k\in\{1,2,\hdots,m-1\}$ and $\{e_{m1},e_{m2},e_{m3},e_{11}\}$ is the edge set of cycle $C_m$.  Also $e_{k1}$ always represents the common edge between $C_{k-1}$ and $C_k$ for $k\in\{1,2,\hdots,m-1\}$ and $e_{11}$ is the common edge between the cycle $C_m$ and $C_1$.
 %If  $C_1,C_2,\hdots,C_r$ are the $r$ cycles of the graph $\mathcal{G}_{n,r}$ with respective lengths $m_1,m_2,\hdots, m_r$ then we fix the edge set of $\mathcal{G}_{n,r}$ as follows;
%\begin{equation}
%E=\{e_{11},\hdots,e_{1m_1},e_{21},
%\cdots ,e_{1m_2-1},\hdots,e_{r1},\hdots ,e_{rm_2-1},e_1,\hdots,e_t\}
%\end{equation}
%where, $t=n-\sum\limits_{i=1}^{r} m_i+(r-1)$ and $\{e_{i1}, \ldots, e_{iv}\}$ is the edge-set of $i$th-cycle such that $v=m_1$ for $i=1,\;\; v=m_i-1$ for $i> 1$ and $e_{i1}$ always represents the common edge between $i$th and $(i+1)$th-cycle.\\
%We give the characterization of $s(\mathcal{G}_{n,r})$ in Proposition \ref{scn}. The formulation for $f-vectors$ is presented in Lemma
%\ref{fsc} which helps us to device a formula to compute the {\em
%Hilbert series} of the Stanley Reisner ring
%$k\big[\Delta_s(\mathcal{G}_{n,r})\big]$ given in Theorem \ref{Hil}. Finally, in Lemma \ref{Ass} we give the characterization of all the associated primes of the facet ideal $I_{\mathcal{F}}(\Delta_s(\mathcal{G}_{n,r}))$. This paper gives the generalization of the results given in \cite{ARK,KAR}.

\section{Preliminaries}
In this section, we give some background and preliminaries of the topic and define some important notions to make this paper self-contained. However, for more details of the notions we refer the reader to \cite{BH, F1, F2, Ha, HH, MS, Vi}.

\begin{Definition}\label{spa}
\em {A {\em spanning tree} of a simple connected finite graph $G(V,E)$ is
a subtree of $G$ that contains every vertex of $G$.

We represent the collection of all edge-sets of the spanning trees
of $G$ by $s(G)$, in other words;
$$s(G):=\{E(T_i)\subset E ,    \hbox {\, where $T_i$ is a spanning tree of $G$}\}.$$
}
\end{Definition}
We can obtain the spanning tree of the Jahangir's graph $\mathcal{J}_{2,m}$ by removing exactly $m$ edges from it keeping in view the following:\\
\begin{itemize}
  \item Not more than one edge can be removed from the non common edges of any cycle.
  \item If a common edge between two or more consecutive cycles is removed then exactly one edge must be removed from the resulting big cycle.
  \item Not all common edges can be removed simultaneously.
\end{itemize}
%For any simple connected graph with disjoint cycles, its spanning tree was obtained in \cite{ARK} by {\em cutting-down method}. In this method a spanning tree is obtained by removing one edge from each cycle appearing in the graph. However, in case of graphs having cycles with one edge common in every consecutive cycle, this method is further extended as follows:\\
%For the graph $\mathcal{G}_{n,r}$ with $r$ cycles having one edge common in every consecutive cycle and the labeling used in eq. (1), we can obtain its spanning trees by removing exactly $r$ edges from the graph with not more than two edges deleted from any cycle, keeping in view that if a common edge between two cycles is removed then only one edge can be removed from the non common edges explicitly from the cycles on the either side of the common edge.
%\\Jahangir graphs $J_{n,m}$, for $m \ge 3$, is a graph on $nm +1$ vertices i.e., a graph consisting of a cycle $C_{nm}$ with one additional vertex which is
%adjacent to $m$ vertices of $C_{nm}$ at distance $n$ to each other on $C_{nm}$.
This method is referred to as the {\em cutting-down method}. For example, by using the {\em cutting-down method} for the graph $\mathcal{J}_{2,3}$ given
in Fig. 1 we obtain:

$s(\mathcal{J}_{2,3})=\big\{ \{ e_{11}, e_{21}, e_{31}, e_{12}, e_{22}, e_{32}\},\{ e_{11}, e_{21}, e_{31}, e_{12}, e_{22}, e_{33}\},\{ e_{11}, e_{21}, e_{31}, e_{12}, e_{23}, \\
e_{32}\},\{ e_{11}, e_{21}, e_{31}, e_{12}, e_{23}, e_{33}\},\{ e_{11}, e_{21}, e_{31}, e_{13}, e_{22}, e_{32}\},\{ e_{11}, e_{21}, e_{31}, e_{13}, e_{22}, e_{33}\},\{ e_{11},\\
e_{21}, e_{31}, e_{13}, e_{23}, e_{32}\},\{ e_{11}, e_{21}, e_{31}, e_{13}, e_{23}, e_{33}\},\{ e_{21}, e_{31}, e_{32}, e_{33}, e_{12}, e_{22}\},\{ e_{21}, e_{31}, e_{32},e_{33},\\
e_{12}, e_{23}\},\{ e_{21}, e_{31}, e_{32}, e_{33}, e_{13}, e_{22}\},\{ e_{21}, e_{31}, e_{32}, e_{33}, e_{13}, e_{23}\},\{ e_{21}, e_{31}, e_{12}, e_{13}, e_{33}, e_{22}\},\\
\{ e_{21}, e_{31}, e_{12}, e_{13}, e_{33}, e_{23}\},\{ e_{21}, e_{31}, e_{12}, e_{13}, e_{32}, e_{22}\},\{ e_{21}, e_{31}, e_{12}, e_{13}, e_{32}, e_{23}\},\{ e_{11}, e_{31}, e_{12},\\
 e_{13}, e_{22}, e_{32}\},\{ e_{11}, e_{31}, e_{12}, e_{13}, e_{22}, e_{33}\},\{ e_{11}, e_{31}, e_{12},e_{13}, e_{23}, e_{32}\},\{ e_{11}, e_{31}, e_{12},e_{13}, e_{23}, e_{33}\},\\
 \{ e_{11}, e_{31}, e_{22}, e_{23}, e_{13}, e_{32}\},\{ e_{11}, e_{31}, e_{22}, e_{23}, e_{13}, e_{33}\},\{ e_{11}, e_{31}, e_{22}, e_{23}, e_{12}, e_{32}\},\{ e_{11}, e_{31}, e_{22},\\
 e_{23}, e_{12}, e_{33}\},\{ e_{11}, e_{21}, e_{23}, e_{22}, e_{32}, e_{12}\},\{ e_{11}, e_{21}, e_{23}, e_{22}, e_{32}, e_{13}\},\{ e_{11}, e_{21}, e_{23}, e_{22}, e_{33}, e_{12}\},\\
 \{ e_{11}, e_{21}, e_{23}, e_{22}, e_{33}, e_{13}\},\{ e_{11}, e_{21}, e_{32}, e_{33}, e_{22}, e_{12}\},\{ e_{11}, e_{21}, e_{32}, e_{33}, e_{22}, e_{13}\},\{ e_{11}, e_{21}, e_{32},\\
 e_{33}, e_{23}, e_{12}\},\{ e_{11}, e_{21}, e_{32}, e_{33}, e_{23}, e_{13}\},\{ e_{11}, e_{13}, e_{22}, e_{23}, e_{32}, e_{33}\},\{ e_{11}, e_{12}, e_{22}, e_{23}, e_{32}, e_{33}\},\\
 \{ e_{11}, e_{12}, e_{13}, e_{23}, e_{32}, e_{33}\},\{ e_{11}, e_{12}, e_{13}, e_{22}, e_{32}, e_{33}\},\{ e_{11}, e_{12}, e_{13}, e_{22}, e_{23}, e_{33}\},\{ e_{11}, e_{12}, e_{13},\\
  e_{22}, e_{23}, e_{32}\},\{ e_{21}, e_{13}, e_{22}, e_{23}, e_{32}, e_{33}\},\{ e_{21}, e_{12}, e_{22}, e_{23}, e_{32}, e_{33}\}, \{ e_{21}, e_{12}, e_{13}, e_{23}, e_{32}, e_{33}\},\\
  \{ e_{21}, e_{12}, e_{13}, e_{22}, e_{32}, e_{33}\},\{ e_{21}, e_{12}, e_{13}, e_{22}, e_{23}, e_{33}\},\{ e_{21}, e_{12}, e_{13}, e_{22}, e_{23}, e_{32}\},
  \{ e_{31}, e_{13}, e_{22},\\
   e_{23}, e_{32}, e_{33}\},\{ e_{31}, e_{12}, e_{22}, e_{23}, e_{32}, e_{33}\}, \{ e_{31}, e_{12}, e_{13}, e_{23}, e_{32}, e_{33}\},\{ e_{31}, e_{12}, e_{13}, e_{22}, e_{32}, e_{33}\},\\
   \{ e_{31}, e_{12}, e_{13}, e_{22}, e_{23}, e_{33}\},\{ e_{31}, e_{12}, e_{13}, e_{22}, e_{23}, e_{32}\}
  \big\}.$

\[\begin{tikzpicture}

	%\draw (0,0) circle (2cm);
	\vertex[fill] (1) at (0,0)  {};
	\vertex[fill] (2) at (0,-2)  {};
	\vertex[fill] (3) at (1.732,1)  {};
	\vertex[fill] (4) at (-1.732,1)  {};
    \vertex[fill] (5) at (0,2)  {};
	\vertex[fill] (6) at (-1.732,-1)  {};
	\vertex[fill] (7) at (1.732,-1)  {};

	%\path
%(1) edge node[auto] {$\Psi$}(2)
%
%%
%%	(3) edge node {$\varphi$}(2)
%%
%%(2) edge node[fill=white,inner sep=2pt] {$\Phi$}(4);
%		
%		(1) edge (2)
%(1) edge (3)
%(1) edge (4)
%;

\draw (30:0cm) -- node[below] {$e_{21}$} (30:2cm)
arc  (30:60:2cm) node[above]{$e_{13}$} arc (60:90:2cm)
arc  (90:120:2cm) node[above]{$e_{12}$} arc (120:150:2cm)
%--node[auto] {$e_1$} (150:0cm)
arc  (150:180:2cm) node[left]{$e_{33}$} arc (180:210:2cm)
arc  (210:240:2cm) node[left]{$e_{32}$} arc (240:270:2cm)
arc  (270:300:2cm) node[right]{$e_{23}$} arc (300:330:2cm)
arc  (330:360:2cm) node[right]{$e_{22}$} arc (0:30:2cm)
(150:0cm) -- node[above] {$e_{11}$} (150:2cm)
(270:0cm) -- node[left] {$e_{31}$} (270:2cm)
;
%--node[auto] {$e_1$} (270:0cm);
\end{tikzpicture}\]
\begin{center}
The graph $\mathcal{J}_{2,3}$\\
\tiny{Figure 1}
\end{center}

\begin{Definition}{\em
A {\em simplicial complex} $\Delta$ over a finite set
$[n]=\{1, 2,\ldots,n \}$ is a collection of subsets of $[n]$, with
the property that $\{i\}\in \Delta$ for all $i\in[n]$, and if $F\in
\Delta$  then $\Delta$ will contain all the subsets of $F$
(including the empty set). An element of $\Delta$ is called a face
of $\Delta$, and the dimension of a face $F$ of $\Delta$ is defined
as $|F|-1$, where $|F|$ is the number of vertices of $F$. The
maximal faces of $\Delta$ under inclusion are called facets of
$\Delta$. The dimension of the simplicial complex $\Delta$ is :
$$\hbox{dim} \Delta = \max\{\hbox{dim} F | F \in \Delta\}.$$
We denote the simplicial complex $\Delta$ with facets $\{F_1,\ldots
, F_q\}$ by $$\Delta = \big\langle F_1,\ldots, F_q\big\rangle $$ }
\end{Definition}
\begin{Definition}\label{fvec}
{\em
For a simplicial complex $\Delta$ having dimension $d$, its
$f-vector$ is a $d+1$-tuple, defined as:
$$f(\Delta)=(f_0,f_1,\ldots,f_d)$$
where $f_i$ denotes the number of $i-dimensional$ faces of $\Delta.$
}\end{Definition}

\begin{Definition}\label{ssc}{\bf (Spanning Simplicial Complex )}\\
{\em
Let $G(V,E)$ be a a simple finite connected graph and $s(G)=\{E_1,
E_2,\ldots,E_t\}$ be the edge-set of all possible spanning trees of
$G(V,E)$, then we defined (in \cite{ARK}) a simplicial complex $\Delta_s(G)$ on $E$ such
that the facets of $\Delta_s(G)$ are precisely the elements of
$s(G)$, we call $\Delta_s(G)$ as the {\em spanning simplicial
complex} of $G(V,E)$. In other words;
$$\Delta_s(G)=\big\langle E_1,E_2,\ldots,E_t\big\rangle.$$
}\end{Definition}
For example; the spanning simplicial complex of
the graph $\mathcal{J}_{2,3}$ given in Fig. 1 is:

$\Delta_s(\mathcal{J}_{2,3})=\big\langle \{ e_{11}, e_{21}, e_{31}, e_{12}, e_{22}, e_{32}\},\{ e_{11}, e_{21}, e_{31}, e_{12}, e_{22}, e_{33}\},\{ e_{11}, e_{21}, e_{31}, e_{12}, e_{23}, \\
e_{32}\},\{ e_{11}, e_{21}, e_{31}, e_{12}, e_{23}, e_{33}\},\{ e_{11}, e_{21}, e_{31}, e_{13}, e_{22}, e_{32}\},\{ e_{11}, e_{21}, e_{31}, e_{13}, e_{22}, e_{33}\},\{ e_{11},\\
e_{21}, e_{31}, e_{13}, e_{23}, e_{32}\},\{ e_{11}, e_{21}, e_{31}, e_{13}, e_{23}, e_{33}\},\{ e_{21}, e_{31}, e_{32}, e_{33}, e_{12}, e_{22}\},\{ e_{21}, e_{31}, e_{32},e_{33},\\
e_{12}, e_{23}\},\{ e_{21}, e_{31}, e_{32}, e_{33}, e_{13}, e_{22}\},\{ e_{21}, e_{31}, e_{32}, e_{33}, e_{13}, e_{23}\},\{ e_{21}, e_{31}, e_{12}, e_{13}, e_{33}, e_{22}\},\\
\{ e_{21}, e_{31}, e_{12}, e_{13}, e_{33}, e_{23}\},\{ e_{21}, e_{31}, e_{12}, e_{13}, e_{32}, e_{22}\},\{ e_{21}, e_{31}, e_{12}, e_{13}, e_{32}, e_{23}\},\{ e_{11}, e_{31}, e_{12},\\
 e_{13}, e_{22}, e_{32}\},\{ e_{11}, e_{31}, e_{12}, e_{13}, e_{22}, e_{33}\},\{ e_{11}, e_{31}, e_{12},e_{13}, e_{23}, e_{32}\},\{ e_{11}, e_{31}, e_{12},e_{13}, e_{23}, e_{33}\},\\
 \{ e_{11}, e_{31}, e_{22}, e_{23}, e_{13}, e_{32}\},\{ e_{11}, e_{31}, e_{22}, e_{23}, e_{13}, e_{33}\},\{ e_{11}, e_{31}, e_{22}, e_{23}, e_{12}, e_{32}\},\{ e_{11}, e_{31}, e_{22},\\
 e_{23}, e_{12}, e_{33}\},\{ e_{11}, e_{21}, e_{23}, e_{22}, e_{32}, e_{12}\},\{ e_{11}, e_{21}, e_{23}, e_{22}, e_{32}, e_{13}\},\{ e_{11}, e_{21}, e_{23}, e_{22}, e_{33}, e_{12}\},\\
 \{ e_{11}, e_{21}, e_{23}, e_{22}, e_{33}, e_{13}\},\{ e_{11}, e_{21}, e_{32}, e_{33}, e_{22}, e_{12}\},\{ e_{11}, e_{21}, e_{32}, e_{33}, e_{22}, e_{13}\},\{ e_{11}, e_{21}, e_{32},\\
 e_{33}, e_{23}, e_{12}\},\{ e_{11}, e_{21}, e_{32}, e_{33}, e_{23}, e_{13}\},\{ e_{11}, e_{13}, e_{22}, e_{23}, e_{32}, e_{33}\},\{ e_{11}, e_{12}, e_{22}, e_{23}, e_{32}, e_{33}\},\\
 \{ e_{11}, e_{12}, e_{13}, e_{23}, e_{32}, e_{33}\},\{ e_{11}, e_{12}, e_{13}, e_{22}, e_{32}, e_{33}\},\{ e_{11}, e_{12}, e_{13}, e_{22}, e_{23}, e_{33}\},\{ e_{11}, e_{12}, e_{13},\\
  e_{22}, e_{23}, e_{32}\},\{ e_{21}, e_{13}, e_{22}, e_{23}, e_{32}, e_{33}\},\{ e_{21}, e_{12}, e_{22}, e_{23}, e_{32}, e_{33}\}, \{ e_{21}, e_{12}, e_{13}, e_{23}, e_{32}, e_{33}\},\\
  \{ e_{21}, e_{12}, e_{13}, e_{22}, e_{32}, e_{33}\},\{ e_{21}, e_{12}, e_{13}, e_{22}, e_{23}, e_{33}\},\{ e_{21}, e_{12}, e_{13}, e_{22}, e_{23}, e_{32}\},
  \{ e_{31}, e_{13}, e_{22},\\
   e_{23}, e_{32}, e_{33}\},\{ e_{31}, e_{12}, e_{22}, e_{23}, e_{32}, e_{33}\}, \{ e_{31}, e_{12}, e_{13}, e_{23}, e_{32}, e_{33}\},\{ e_{31}, e_{12}, e_{13}, e_{22}, e_{32}, e_{33}\},\\
   \{ e_{31}, e_{12}, e_{13}, e_{22}, e_{23}, e_{33}\},\{ e_{31}, e_{12}, e_{13}, e_{22}, e_{23}, e_{32}\}\big\rangle.$

\section{Spanning trees of $\mathcal{J}_{2,m}$ and Face ring $\Delta_s(\mathcal{J}_{2,m})$ }

In this section, we give two lemmas which give some important characterization of the graph $\mathcal{J}_{2,m}$ and its spanning simplicial complex  $s(\mathcal{J}_{2,m})$. We present a proposition which gives the $f$-vectors and dimension of the $\mathcal{J}_{2,m}$. Finally, in Theorem \ref{Hil} we give the formulation for the Hilbert series of the Face ring $k\big[\Delta_s(\mathcal{J}_{2,m})\big]$.

\begin{Lemma}\label{lema1}{\bf Characterization of $\mathcal{J}_{2,m}$}\\
\em{Let $\mathcal{J}_{2,m}$ be the graph with the edges $E$ as defined in eq. (1) and $C_1,C_2,\cdots,C_m$ be its $m$ consecutive cycles of equal lengths, then the total number of cycles in the graph are
%$$\tau=\sum\limits_{k=0}^{r-1}{\sum\limits_{i=1}^{r-k}1}=\frac{r(r+1)}{2}$$
$$\tau=m^2$$
such that $\Big|C_{i_1,i_2,\hdots,i_k}\Big|=2(k+1).$ }
\end{Lemma}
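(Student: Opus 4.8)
The plan is to read off all cycles of $\mathcal{J}_{2,m}$ from its combinatorial skeleton and then count them. For $m\ge 3$, view $\mathcal{J}_{2,m}$ as a central vertex $c$ together with the outer cycle $C_{2m}$, where $c$ is joined to the $m$ vertices of $C_{2m}$ at which the common edges $e_{11},\dots,e_{m1}$ meet it. Then each common edge $e_{k1}$ is a spoke at $c$; each $C_k$ is the $4$-cycle formed by the two successive spokes $e_{k1},e_{(k+1)1}$ (indices cyclic, $e_{(m+1)1}:=e_{11}$) together with the two outer edges $e_{k2},e_{k3}$ joining their non-central endpoints; and every edge of $\mathcal{J}_{2,m}$ is either one of the $m$ spokes $e_{k1}$ or one of the $2m$ outer edges $e_{k2},e_{k3}$, the latter constituting $C_{2m}$.

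The principal step is an exhaustive description of the cycles. Let $Z$ be a cycle of $\mathcal{J}_{2,m}$. If $c\notin Z$, then $Z$ is a cycle of $\mathcal{J}_{2,m}-c=C_{2m}$, which is chordless, so $Z=C_{2m}$, a single cycle of length $2m$. If $c\in Z$, then $c$ has degree $2$ in $Z$ and every edge at $c$ is a spoke, so $Z$ uses exactly two spokes $e_{p1},e_{q1}$ with $p\ne q$; deleting $c$ from $Z$ leaves a simple path between the non-central endpoints of $e_{p1}$ and $e_{q1}$, and all its edges are outer edges (each spoke is incident to $c$, which this path avoids), so it is one of the two arcs into which those two vertices split $C_{2m}$. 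The arc running over the block of cyclically consecutive petals $C_{i_1},\dots,C_{i_k}$ yields exactly the cycle $C_{i_1,\dots,i_k}$ obtained by fusing those petals, i.e.\ deleting the $k-1$ interior common edges and retaining the two boundary spokes. Thus every cycle of $\mathcal{J}_{2,m}$ is either $C_{2m}$ or such a fused block of cyclically consecutive petals, the blocks being parametrized by a length $k$ and a starting index.

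Both assertions then follow. For $1\le k\le m-1$ the cycle $C_{i_1,\dots,i_k}$ keeps all $2k$ outer edges $e_{i_j2},e_{i_j3}$ of the petals in the block together with the two boundary spokes, so it has $2k+2=2(k+1)$ edges, and just as many vertices, namely $c$, the $k+1$ spoke-endpoints it visits and the $k$ intervening outer vertices; the $k=1$ blocks are precisely the petals $C_i$, of length $4$. For the count, each $k\in\{1,\dots,m-1\}$ admits exactly $m$ blocks of $k$ cyclically consecutive indices in $\{1,\dots,m\}$, hence $m(m-1)$ pairwise distinct cycles through $c$; adjoining $C_{2m}$, which in the indexing used here is the degenerate $k=m$ block (fusing all petals leaves no spoke, so the cycle produced is $C_{2m}$ itself) and is recorded once for each of its $m$ cyclic presentations $C_{1,2,\dots,m},\,C_{2,3,\dots,m,1},\,\dots,\,C_{m,1,\dots,m-1}$, one obtains $m$ cycles $C_{i_1,\dots,i_k}$ for every $k\in\{1,\dots,m\}$, whence $\tau=\sum_{k=1}^{m}m=m^2$.

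The substance of the argument is the exhaustive description of the cycles, and the only place where the geometry of $\mathcal{J}_{2,m}$ genuinely enters is the observation that a cycle through $c$ uses only two spokes and that deleting $c$ from it leaves a genuine arc of $C_{2m}$; both are immediate from $\deg_Z(c)=2$ together with the fact that the non-outer edges are exactly the spokes, all incident to $c$. After that, the length formula is a one-line edge count and the enumeration is a count of cyclic intervals. The one point to handle with care is the bookkeeping convention for $C_{2m}$, namely counting the single underlying outer cycle with each of its $m$ cyclic presentations, which is what produces the clean total $\tau=m^2$; once this convention is fixed, the remaining $m(m-1)$ cycles are pairwise distinct, since a block of length $k\le m-1$ uses two spokes whereas $C_{2m}$ uses none, and two blocks coincide only when they have the same length and the same starting index.
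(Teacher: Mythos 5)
Your argument is correct and arrives at the same enumeration as the paper, but by a somewhat different and in fact tighter route. The paper proceeds constructively: it declares that the cycles besides the petals $C_1,\dots,C_m$ are precisely the fused cycles $C_{i_1,\dots,i_k}$ obtained by deleting the interior common edges of a block of consecutive petals, lists them, counts $m$ of them for each $k$ with $1\le k\le m$, and gets the length $2(k+1)$ by summing the petal lengths and subtracting $2(k-1)$ for the doubly counted common edges; that this construction exhausts \emph{all} cycles of $\mathcal{J}_{2,m}$ is asserted rather than argued. You instead classify the cycles structurally through the hub vertex $c$: a cycle avoiding $c$ lies in the chordless outer cycle $C_{2m}$ and hence equals it, while a cycle through $c$ uses exactly two spokes together with one arc of $C_{2m}$ and is therefore a fused block of consecutive petals. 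This supplies the exhaustiveness step the paper omits, and your direct edge count $2k+2$ replaces the paper's ``sum minus twice the common edges'' computation. You are also more careful at $k=m$: the $m$ labels $C_{1,2,\dots,m},\dots,C_{m,1,\dots,m-1}$ all name the single outer cycle (of length $2m$, not $2(m+1)$), so the set of distinct cycles has cardinality $m(m-1)+1$, and $\tau=m^2$ holds only under the convention, which you state explicitly, of counting that cycle once per cyclic presentation --- the same convention the paper uses tacitly when it counts $m$ cycles for every $k$ up to $m$. In short, your proof buys a genuine verification that no cycles are missed and an honest account of the $k=m$ bookkeeping, at the cost of introducing the hub-and-spoke description; the paper's version is shorter but leaves both of those points implicit.
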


\begin{proof}
The Jhangir's graph $\mathcal{J}_{2,m}$ contains more than just $m$ consecutive cycles. The remaining cycles can be obtained by deleting the common edges between any number (but all) of consecutive cycles and getting a cycle by their remaining edges. The cycle obtained in this way by adjoining consecutive cycles $C_{i_1},C_{i_2},\hdots,C_{i_k}$ is denoted by $C_{i_1,i_2,\hdots,i_k}$.Therefore, we get the following cycles\\
$C_{1,2},C_{2,3},\hdots,C_{m-1,m},C_{m,1},C_{1,2,3},\hdots,C_{m-2,m-1,m},C_{m-1,m,1},C_{m,1,2},\hdots,C_{1,2,3,\hdots,m},\\
C_{2,3,4,\hdots,m,1},C_{3,4,5,\hdots,m,1,2},C_{m,1,2,\hdots,m-1}.$\\

Combining these with given $m$ cycles we have total cycles in the graph $\mathcal{J}_{2,m}$,
$$C_{i_1,i_2,\hdots,i_k}\;\;\;i_j\in\{1,2,\hdots,m\}\; and \; 1 \le k\le m,$$
such that $i_{j+1}=i_j+1$ if $i_j\neq m$ and $i_{j+1}=1$ if $i_j= m.$ \\
Now for a fixed value of $k$, simple counting reveals that the total number of cycles $C_{i_1,i_2,\hdots,i_k}$ are $m$. Hence the total number of cycles in $\mathcal{J}_{2,m}$ is $\tau$. Also it is clear from the construction above that $C_{i_1,i_2,\hdots,i_k}$ is obtained by deleting common edges between consecutive cycles $C_{i_1},C_{i_2},\hdots,C_{i_k}$ which are $k-1$ in number. Therefore, the order of the cycle $C_{i_1,i_2,\hdots,i_k}$ is obtained by adding orders of all $C_{i_1},C_{i_2},\hdots,C_{i_k}$ and subtracting $2(k-1)$ from it, since the common edges are being counted twice in sum. This implies
$$ \Big|C_{i_1,i_2,\hdots,i_k}\Big|=\sum\limits_{t=1}^{k}\big|C_{i_t}\big|-2(k-1)=2(k+1).$$

We denote $\beta_{i_1,i_2,\hdots,i_k}=\Big|C_{i_1,i_2,\hdots,i_k}\Big|$.\\
\end{proof}
In the following results, we fix $C_{u_1,u_2,\hdots,u_p},\; C_{v_1,v_2,\hdots,v_q}$ to represent any two cycles from the cycles $$C_{i_1,i_2,\hdots,i_k}\;\;\;i_j\in\{1,2,\hdots,m\}\; and \; 1 \le k\le m,$$
such that $i_{j+1}=i_j+1$ if $i_j\neq m$ and $i_{j+1}=1$ if $i_j= m,$ of the graph $\mathcal{J}_{2,m}$. Also we fix the notation "$a\rightarrow b$" if $b$ immediately proceeds $a$ i.e., the very next in order of preferences.

\begin{Proposition}\label{prp2}
\em{Let $\mathcal{J}_{2,m}$ be the graph with the edges $E $ as defined in eq. (1) such that $\{u_1,u_2,\hdots,u_p\}\subseteq \{v_1,v_2,\hdots,v_q\}$ then we have
$$\Big|C_{u_1,u_2,\hdots,u_p}\bigcap C_{v_1,v_2,\hdots,v_q}\Big|=\left\{
                                                       \begin{array}{ll}
                                                         \beta_{u_1,u_2,\hdots,u_p}-2, & {\{u_1,u_p\}\not\subseteq\; \{v_1,v_q\}} \\
                                                         \beta_{u_1,u_2,\hdots,u_p}-1, & {u_1 \in \{v_1,v_q\}\;\&\;u_p\notin\{v_1,v_q\}} \\
                                                         \beta_{u_1,u_2,\hdots,u_p}-1, & {u_p \in \{v_1,v_q\}\;\&\;u_1\notin\{v_1,v_q\}} \\
                                                         \beta_{u_1,u_2,\hdots,u_p},   & {u_1=v_1\; \&\; u_p=v_q\; or\; u_1=v_q\;\; u_p=v_q}
                                                       \end{array}
                                                     \right.$$}
\end{Proposition}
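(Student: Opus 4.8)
The plan is to start from a structural preliminary: an explicit description of the edge set of an arbitrary cycle $C_{i_1,i_2,\ldots,i_k}$ of $\mathcal{J}_{2,m}$. By the adjoining procedure used in the proof of Lemma~\ref{lema1}, this cycle is obtained from the $k$ consecutive blocks $C_{i_1},\ldots,C_{i_k}$ by uniting their edge sets and deleting the $k-1$ common (``spoke'') edges $e_{i_2 1},e_{i_3 1},\ldots,e_{i_k 1}$ that neighbouring blocks share. Since the four edges of a block $C_{i_t}$ are its two spokes $e_{i_t 1}$, $e_{(i_t+1)1}$ and its two non-common edges $e_{i_t 2}$, $e_{i_t 3}$ (indices read cyclically, with $e_{(m+1)1}$ meaning $e_{11}$), this leaves exactly
$$\text{edge set of } C_{i_1,\ldots,i_k}\;=\;\{\,e_{i_t 2},\,e_{i_t 3}\ :\ 1\le t\le k\,\}\;\cup\;\{\,e_{i_1 1},\ e_{(i_k+1)1}\,\},$$
that is, the $2k$ non-common edges of the blocks together with the two \emph{extreme} spokes $e_{i_1 1}$ (left end of $C_{i_1}$) and $e_{(i_k+1)1}$ (right end of $C_{i_k}$); in particular this recovers $\beta_{i_1,\ldots,i_k}=2(k+1)$. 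I would isolate this as a short claim, since the rest of the argument is a direct consequence.

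Next, let $U=\{u_1,\ldots,u_p\}$ and $V=\{v_1,\ldots,v_q\}$ be two such consecutive index sets (listed in cyclic order) with $U\subseteq V$, and split the edge sets of $C_{u_1,\ldots,u_p}$ and $C_{v_1,\ldots,v_q}$ into non-common edges and spokes as above. Because an edge $e_{a1}$ is never equal to any $e_{b2}$ or $e_{b3}$, a non-common edge is never a spoke, so any edge common to the two cycles has the same type in both. Since $U\subseteq V$, each of the $2p$ non-common edges $e_{u_t 2},e_{u_t 3}$ of $C_{u_1,\ldots,u_p}$ is also a non-common edge of $C_{v_1,\ldots,v_q}$, so the two cycles share all of them. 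For spokes, the only spokes occurring in $C_{v_1,\ldots,v_q}$ are its two extreme ones $e_{v_1 1}$ and $e_{(v_q+1)1}$, so $\big|C_{u_1,\ldots,u_p}\cap C_{v_1,\ldots,v_q}\big|=2p+s$, where $s\in\{0,1,2\}$ counts how many of the two extreme spokes $e_{u_1 1}$, $e_{(u_p+1)1}$ of $C_{u_1,\ldots,u_p}$ belong to $\{e_{v_1 1},\,e_{(v_q+1)1}\}$.

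It remains to evaluate $s$. Here I would use that $V$ is a \emph{proper} arc of the outer cycle $C_{2m}$, so its cyclic predecessor $v_1-1$ and its cyclic successor $v_q+1$ both lie outside $V$. Since $u_1\in U\subseteq V$ we have $u_1\ne v_q+1$, hence $e_{u_1 1}\in\{e_{v_1 1},e_{(v_q+1)1}\}$ precisely when $u_1=v_1$; likewise $u_p\in V$ gives $u_p+1\ne v_1$, hence $e_{(u_p+1)1}\in\{e_{v_1 1},e_{(v_q+1)1}\}$ precisely when $u_p=v_q$. Therefore $s$ is the number of the two equalities $u_1=v_1$, $u_p=v_q$ that hold, and substituting $\beta_{u_1,\ldots,u_p}=2(p+1)$ yields the intersection size $\beta_{u_1,\ldots,u_p}-2$, $\beta_{u_1,\ldots,u_p}-1$, or $\beta_{u_1,\ldots,u_p}$ according as $0$, $1$, or $2$ of those coincidences occur; the two ``$\beta-1$'' possibilities are distinguished by which endpoint matches, and ``both match'' is exactly the case $C_{u_1,\ldots,u_p}=C_{v_1,\ldots,v_q}$, which matches the four cases of the statement.

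I expect the only real difficulty to be bookkeeping rather than anything deep: nailing down the preliminary edge-set claim exactly (especially the role of the cyclic successor $i_k+1$ and the identification $e_{(m+1)1}=e_{11}$), and handling separately the degenerate instances in which $U$ or $V$ equals the whole family $\{1,\ldots,m\}$, where the two ``extreme'' spokes of that cycle collapse to a single edge and the endpoint count must be checked directly. Once those boundary situations are dispatched, the intersection count itself is just the short type-by-type comparison carried out above.
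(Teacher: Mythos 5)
Your proposal is correct and takes essentially the same route as the paper: its (much terser) proof also decomposes $C_{u_1,\ldots,u_p}$ into its non-common edges, all of which lie in the larger cycle, plus its two extreme common (spoke) edges, whose survival in $C_{v_1,\ldots,v_q}$ is governed by the endpoint coincidences $u_1=v_1$ and $u_p=v_q$. Your explicit edge-set description of $C_{i_1,\ldots,i_k}$ and the count $s$ merely make rigorous what the paper dispatches with ``the remaining cases can be visualized in similar manner,'' including the degenerate whole-cycle situations you rightly flag.
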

\begin{proof}
Since the cycles $C_{u_1,u_2,\hdots,u_p}$ and $C_{v_1,v_2,\hdots,v_q}$ are obtained by deleting the common edges between cycles $C_{u_1},C_{u_2},\hdots,C_{u_p}$ and $C_{v_1},C_{v_2},\hdots,C_{v_q}$ respectively. Therefore, ${\{u_1,u_p\}\not\subseteq\; \{v_1,v_q\}}$ implies $\{u_1,u_2,\hdots,u_p\}\subset \{v_1,v_2,\hdots,v_q\}$. Hence the intersection $C_{u_1,u_2,\hdots,u_p}\bigcap C_{v_1,v_2,\hdots,v_q}$ will contain only the non common edges of the cycle $C_{u_1,u_2,\hdots,u_p}$ excluding its two edges common with the cycles on its each end. This gives the order of intersection in this case as $\beta_{u_1,u_2,\hdots,u_p}-2$. The remaining cases can be visualized in similar manner.
\end{proof}

\begin{Proposition}\label{prp3}
\em{Let $\mathcal{J}_{2,m}$ be the graph with the edges $E $ as defined in eq. (1) such that $\{\overline{u}_1,\overline{u}_2,\hdots,\overline{u}_\sigma\}\subseteq \{v_1,v_2,\hdots,v_q\}$ and $\overline{u}_t\in\{u_1,u_2,\hdots,u_p\}\;\&\;\overline{u}_{t-1}\rightarrow\overline{u}_t$  with $t\le\sigma<p$ then we have

$$\Big|C_{u_1,u_2,\hdots,u_p}\bigcap C_{v_1,v_2,\hdots,v_q}\Big|=\left\{
                                                       \begin{array}{ll}
                                                         \beta_{\overline{u}_1,\overline{u}_2,\hdots,\overline{u}_\sigma}-1, & {\overline{u}_1=v_1\;\&\;v_q\rightarrow u_1} \\
                                                         \beta_{\overline{u}_1,\overline{u}_2,\hdots,\overline{u}_\sigma}-2, & {\overline{u}_1=v_1\;\&\;v_q\not\rightarrow u_1} \\
                                                         \beta_{\overline{u}_1,\overline{u}_2,\hdots,\overline{u}_\sigma}-1, & {\overline{u}_\sigma=v_q\;\&\;u_p\rightarrow v_1} \\
                                                         \beta_{\overline{u}_1,\overline{u}_2,\hdots,\overline{u}_\sigma}-2, & {\overline{u}_\sigma=v_q\;\&\;u_p\not\rightarrow v_1} \\
                                                       \end{array}
                                                     \right.$$}
\end{Proposition}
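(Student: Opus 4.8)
The plan is to reduce Proposition~\ref{prp3} to Proposition~\ref{prp2} by isolating the ``overlap block'' $C_{\overline{u}_1,\overline{u}_2,\hdots,\overline{u}_\sigma}$. The hypothesis says that exactly the consecutive sub-run $\overline{u}_1\to\overline{u}_2\to\cdots\to\overline{u}_\sigma$ of the index string $u_1,\hdots,u_p$ lies inside $\{v_1,\hdots,v_q\}$, and since both index strings are consecutive runs modulo $m$, this sub-run sits at one of the two ends of the string $u_1,\hdots,u_p$ (otherwise $\{v_1,\dots,v_q\}$ would have to ``skip'' an index, contradicting that it too is a consecutive run). Thus, up to the obvious symmetry, we may assume $\overline{u}_1 = u_1$ and the run terminates at $\overline{u}_\sigma$ with $\sigma < p$, so that $u_{\sigma+1}\notin\{v_1,\hdots,v_q\}$. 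First I would record the key fact, immediate from the cutting-down description in Lemma~\ref{lema1}, that $C_{u_1,\hdots,u_p}\cap C_{v_1,\hdots,v_q}$ consists precisely of those edges of $C_{u_1,\hdots,u_p}$ that also survive in $C_{v_1,\hdots,v_q}$; because an edge $e$ of $\mathcal{J}_{2,m}$ lies on $C_{v_1,\hdots,v_q}$ iff $e$ is a non-common edge of some $C_{v_j}$, this intersection is governed entirely by which ``sub-cycles'' $C_{u_i}$ have all their relevant edges inside the $v$-block. That forces the intersection to live on the edge set of $C_{\overline{u}_1,\hdots,\overline{u}_\sigma}$, and now the four cases are exactly a second application of the end-matching analysis of Proposition~\ref{prp2}, but applied to $C_{\overline{u}_1,\hdots,\overline{u}_\sigma}$ versus $C_{v_1,\hdots,v_q}$, with one subtlety: the edge $e_{(\overline{u}_\sigma+1)1}$ (the common edge leaving the run on the far side) may or may not still be present in the intersection depending on whether $v_q$ is adjacent to $u_1$ (resp.\ whether $u_p$ is adjacent to $v_1$), which is precisely the dichotomy $v_q\to u_1$ vs.\ $v_q\not\to u_1$ recorded in the statement.

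Carrying this out concretely: in the case $\overline{u}_1 = v_1$ and $v_q\to u_1$, the block $C_{\overline{u}_1,\hdots,\overline{u}_\sigma}$ shares its ``left'' end edge $e_{u_1 1}$ with $C_{v_1,\hdots,v_q}$ (since $v_q$ immediately precedes $u_1$, that common edge is internal to $C_{v_1,\hdots,v_q}$ and hence not on its boundary, so it is retained), while its ``right'' end edge $e_{(\overline{u}_\sigma+1)1} = e_{u_{\sigma+1}1}$ is a common edge of $C_{u_1,\hdots,u_p}$ that is deleted in forming that cycle, so it contributes nothing; the net loss from $\beta_{\overline{u}_1,\hdots,\overline{u}_\sigma}$ is therefore only the single edge $e_{u_{\sigma+1}1}$, giving $\beta_{\overline{u}_1,\hdots,\overline{u}_\sigma}-1$. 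In the case $\overline{u}_1 = v_1$ but $v_q\not\to u_1$, the left end edge $e_{u_1 1}$ is now genuinely a boundary edge of $C_{v_1,\hdots,v_q}$ (it is the edge $e_{v_1 1}$, which gets deleted when the $v$-cycle is cut down), so it too is lost, and we drop two edges, giving $\beta_{\overline{u}_1,\hdots,\overline{u}_\sigma}-2$. The two remaining cases, with $\overline{u}_\sigma = v_q$ instead, are the mirror image under reversing the roles of the two endpoints, so they follow by the same argument with ``left'' and ``right'' interchanged.

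The one genuinely delicate point—the step I expect to be the main obstacle to write cleanly—is the initial combinatorial normalization: verifying rigorously that, given both $\{u_1,\hdots,u_p\}$ and $\{v_1,\hdots,v_q\}$ are consecutive runs mod $m$ and that $\{\overline{u}_1,\hdots,\overline{u}_\sigma\}$ (the maximal consecutive sub-run of the $u$'s lying in the $v$'s, with $\sigma<p$) is an \emph{initial or terminal} segment of the $u$-run, never a strictly interior one. The quickest way I would argue this is by contradiction: if $u_1\notin\{v_1,\hdots,v_q\}$ and $u_p\notin\{v_1,\hdots,v_q\}$ but some interior $u_i\in\{v_1,\hdots,v_q\}$, then the $v$-run would contain $u_i$ but neither of its $u$-neighbours extending toward the retained part; tracking indices mod $m$ shows the $v$-run would then have to be all of $\{1,\hdots,m\}$, in which case $C_{v_1,\hdots,v_q}$ is the full outer cycle $C_{2m}$ and one checks the formula directly (in fact that degenerate situation is excluded by $\sigma<p$ together with $\{u_1,\hdots,u_p\}\subseteq\{1,\hdots,m\}$). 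Once this normalization is in hand, the rest is the bookkeeping of end edges sketched above, and Proposition~\ref{prp2} does the real work.
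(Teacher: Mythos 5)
Your overall strategy is the same as the paper's: isolate the overlap block $C_{\overline{u}_1,\hdots,\overline{u}_\sigma}$, argue as in Proposition~\ref{prp2}, and let the $\beta-1$ versus $\beta-2$ dichotomy be decided by whether $C_{v_q}$ and $C_{u_1}$ (resp.\ $C_{u_p}$ and $C_{v_1}$) are consecutive. However, two of your concrete steps fail as written. First, the normalization ``we may assume $\overline{u}_1=u_1$'' is incompatible with the cases you then analyse: if $\overline{u}_1=u_1$ and $\overline{u}_1=v_1$, then $u_1=v_1$, and the hypothesis $v_q\rightarrow u_1$ would force $v_q+1=v_1$, i.e.\ the $v$-run is the whole circle; in the non-degenerate configurations of the first two cases the overlap is a \emph{terminal} segment of the $u$-run ($\overline{u}_\sigma=u_p$), and only in the last two is it an initial one. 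Second, and more seriously, the structural claim that ``the intersection lives on the edge set of $C_{\overline{u}_1,\hdots,\overline{u}_\sigma}$'' is false, and with it your identification of which end edges survive. Take $m=5$, $u$-run $4,5,1$ and $v$-run $1,2,3$, so $\sigma=1$, $\overline{u}_1=1=v_1$ and $v_q=3\rightarrow u_1=4$. Then $C_{4,5,1}\cap C_{1,2,3}=\{e_{12},e_{13},e_{41}\}$, of size $\beta_{1}-1=3$ as the proposition asserts, but $e_{41}\notin C_{1}$: the retained common edge is $e_{u_1 1}=e_{(v_q+1)1}$, the edge shared by $C_{v_q}$ and $C_{u_1}$, which is not an edge of the block, while \emph{both} genuine end edges of the block, $e_{\overline{u}_1 1}=e_{v_1 1}$ and $e_{(\overline{u}_\sigma+1)1}=e_{(u_p+1)1}$, are lost (the first is an interior deleted edge of $C_{u_1,\hdots,u_p}$, the second an interior deleted edge of $C_{v_1,\hdots,v_q}$). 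Your count lands on the right number only because the misidentified gain and loss compensate.

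The repair is routine and brings you back to the paper's (very brief) argument with the bookkeeping made explicit: the intersection always contains exactly the $2\sigma=\beta_{\overline{u}_1,\hdots,\overline{u}_\sigma}-2$ non-common edges $e_{w2},e_{w3}$ with $w\in\{\overline{u}_1,\hdots,\overline{u}_\sigma\}$; beyond these, a common edge $e_{k1}$ lies on $C_{u_1,\hdots,u_p}$ only for $k\in\{u_1,u_p+1\}$ and on $C_{v_1,\hdots,v_q}$ only for $k\in\{v_1,v_q+1\}$ (indices mod $m$), so the extra shared edges are indexed by $\{u_1,u_p+1\}\cap\{v_1,v_q+1\}$, which in the proposition's configurations is a singleton precisely when $v_q\rightarrow u_1$ (first two cases) or $u_p\rightarrow v_1$ (last two) and empty otherwise. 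With the normalization corrected and the end-edge analysis restated in this form, your argument becomes sound and is essentially the proof given in the paper, which likewise reduces everything to whether $C_{v_q}$ and $C_{u_1}$ are consecutive and cites Proposition~\ref{prp2} for the base count.
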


\begin{proof}
Here the cycles $C_{\overline{u}_1},C_{\overline{u}_2},\hdots,C_{\overline{u}_\sigma}$ are amongst $\sigma$ consecutive adjoining cycles of the cycle $C_{u_1,u_2,\hdots,u_p}$ which are also overlapping with the $\sigma$ consecutive adjoining cycles of the cycle $C_{v_1,v_2,\hdots,v_q}$. If the adjoining cycle $C_{\overline{u}_1}$ of the cycle $C_{u_1,u_2,\hdots,u_p}$ overlaps with the first adjoining cycle $C_{v_1}$ of the cycle $C_{v_1,v_2,\hdots,v_q}$ and the adjoining cycles $C_{v_q}$ and $C_{u_1}$ are consecutive then by previous proposition the order of the intersection $C_{u_1,u_2,\hdots,u_p}\bigcap C_{v_1,v_2,\hdots,v_q}$ is indeed $\beta_{\overline{u}_1,\overline{u}_2,\hdots,\overline{u}_\sigma}-1$. Similarly if the adjoining cycles $C_{v_q}$ and $C_{u_1}$ are not consecutive then they will have no common edge and the use of proposition \ref{prp2} gives the order of the intersection $C_{u_1,u_2,\hdots,u_p}\bigcap C_{v_1,v_2,\hdots,v_q}$ as $\beta_{\overline{u}_1,\overline{u}_2,\hdots,\overline{u}_\sigma}-2.$ Similar can be done for the remaining cases.
\end{proof}
\begin{Remark}\label{prp4}
The case when there exists a  $t_0<\sigma<p$  such that  $\overline{u}_{t_0-1}\not\rightarrow\overline{u}_{t_0}$ in above proposition i.e., when cycles $C_{\overline{u}_1},C_{\overline{u}_2},\hdots,C_{\overline{u}_{t_0-1}},C_{\overline{u}_{t_0}},\hdots,C_{\overline{u}_\sigma}$ are not amongst $\sigma$ consecutive adjoining cycles of the cycle $C_{u_1,u_2,\hdots,u_p}$, the order of the intersection $C_{u_1,u_2,\hdots,u_p}\bigcap C_{v_1,v_2,\hdots,v_q} $ can be calculated by applying proposition \ref{prp3} on the overlapping portions.
\end{Remark}

\begin{Proposition}\label{prp5}
\em{Let $\mathcal{J}_{2,m}$ be the graph with the edges $E $ as defined in eq. (1) such that $\{u_1,u_2,\hdots,u_p\}\bigcap \{v_1,v_2,\hdots,v_q\}=\phi$ and $p\le q$ then we have

$$\Big|C_{u_1,u_2,\hdots,u_p}\bigcap C_{v_1,v_2,\hdots,v_q}\Big|=\left\{
                                                       \begin{array}{ll}
                                                         1, & {u_p\rightarrow v_1\;\&\; v_q\not\rightarrow u_1} \\
                                                         1, & {u_p\not\rightarrow v_1\;\&\; v_q\rightarrow u_1} \\
                                                         2, & {u_p\rightarrow v_1\;\&\; v_q\rightarrow u_1} \\
                                                         0, & {otherwise.}
                                                       \end{array}
                                                     \right.$$}
\end{Proposition}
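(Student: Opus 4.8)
The plan is to mirror the structure of the proof of Proposition \ref{prp5}'s companion results (Propositions \ref{prp2} and \ref{prp3}), arguing geometrically about which edges of the graph $\mathcal{J}_{2,m}$ survive in the intersection of the two cycles $C_{u_1,\dots,u_p}$ and $C_{v_1,\dots,v_q}$. Since the index sets $\{u_1,\dots,u_p\}$ and $\{v_1,\dots,v_q\}$ are disjoint, the cycle $C_{u_1,\dots,u_p}$ is built by deleting the common edges $e_{(u_2)1},\dots,e_{(u_p)1}$ among the consecutive component cycles $C_{u_1},\dots,C_{u_p}$, and similarly for $C_{v_1,\dots,v_q}$; hence the two big cycles can only share an edge along the ``seams'' where the consecutive blocks $\{u_1,\dots,u_p\}$ and $\{v_1,\dots,v_q\}$ abut on $C_{2m}$. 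There are exactly two possible seams: the one between $C_{u_p}$ and $C_{v_1}$ (present iff $u_p\rightarrow v_1$) and the one between $C_{v_q}$ and $C_{u_1}$ (present iff $v_q\rightarrow u_1$).

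First I would recall that, in the indexing of eq.~(1), each component cycle $C_k$ has edge set $\{e_{k1},e_{k2},e_{k3},e_{(k+1)1}\}$, so the unique edge shared between $C_k$ and $C_{k+1}$ is $e_{(k+1)1}$, the ``spoke'' incident to the central vertex. Next I would observe that in the big cycle $C_{u_1,\dots,u_p}$ the only spokes that remain are $e_{(u_1)1}$ and $e_{(u_{p}+1)1}$ (the two endpoints of the block), all interior spokes $e_{(u_2)1},\dots,e_{(u_p)1}$ having been deleted; the rim edges $e_{k2},e_{k3}$ for $k\in\{u_1,\dots,u_p\}$ are all retained. The analogous statement holds for $C_{v_1,\dots,v_q}$. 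Therefore an edge lies in both cycles iff it is a spoke that is an endpoint spoke of each block, and since the blocks are disjoint the only candidates are $e_{(v_1)1}$ (which is the right endpoint spoke $e_{(u_p+1)1}$ of the $u$-block precisely when $u_p\rightarrow v_1$) and $e_{(u_1)1}$ (which is the right endpoint spoke $e_{(v_q+1)1}$ of the $v$-block precisely when $v_q\rightarrow u_1$).

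With that dictionary in hand, the four cases are immediate: if $u_p\rightarrow v_1$ and $v_q\not\rightarrow u_1$, exactly the single spoke $e_{(v_1)1}$ is common, giving intersection size $1$; symmetrically for $u_p\not\rightarrow v_1$ and $v_q\rightarrow u_1$; if both seams are present then both spokes $e_{(v_1)1}$ and $e_{(u_1)1}$ are common, giving size $2$; and if neither seam is present the two blocks are separated on $C_{2m}$ by at least one intervening component cycle on each side, so no spoke (and no rim edge, by disjointness of index sets) is shared, giving size $0$. I would phrase this using the ``$\rightarrow$'' notation already fixed in the text and perhaps note that $u_p\rightarrow v_1$ and $v_q\rightarrow u_1$ can hold simultaneously only when $p+q=m$, i.e.\ the two blocks together exhaust all $m$ component cycles. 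The main (and only real) obstacle is bookkeeping the boundary indices modulo $m$ correctly — in particular keeping straight that $e_{(u_p+1)1}$ means $e_{(v_1)1}$ when $u_p+1\equiv v_1$, with the wrap-around convention $e_{(m+1)1}=e_{11}$ from eq.~(1) — but this is routine and carries no conceptual difficulty.
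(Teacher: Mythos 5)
Your argument is correct and takes essentially the same route as the paper's proof: both rest on the observation that, with disjoint index blocks, the cycles $C_{u_1,\hdots,u_p}$ and $C_{v_1,\hdots,v_q}$ can only meet at the seams where an extreme end of one block is consecutive to an extreme end of the other, each seam contributing exactly one common spoke edge. Your version is in fact more explicit than the paper's brief sketch (which dismisses the two-seam and no-seam cases as easy to see), since you inventory precisely which spokes $e_{(u_1)1},e_{(u_p+1)1}$ and rim edges survive in each big cycle and check the wrap-around convention modulo $m$.
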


\begin{proof}
In this case the adjoining cycles of $C_{u_1,u_2,\hdots,u_p}$ and $C_{v_1,v_2,\hdots,v_q}$ have no common cycle. However, if the adjoining cycle on one of the extreme ends of the cycle $C_{u_1,u_2,\hdots,u_p}$ is consecutive with the adjoining cycles on one of the extreme ends of the other cycle $C_{v_1,v_2,\hdots,v_q}$ then the intersection $C_{u_1,u_2,\hdots,u_p}\bigcap C_{v_1,v_2,\hdots,v_q}$ will have only one edge. The remaining cases are easy to see.
\end{proof}

In the following three propositions we give some characterizations of $\mathcal{J}_{2,m}$. We fix $E(T_{(j_1i_1,j_2i_2,\hdots,j_mi_m)})$, where $j_\alpha\in\{1,2,\hdots,m\}$ and $i_\alpha\in\{1,2,3\}$, as a subset of $E.$
$s(\mathcal{J}_{2,m}).$
\begin{Proposition}\label{prp6}
\em{A subset $E(T_{(j_1i_1,j_2i_2,\hdots,j_mi_m)}$ of $E$ with $j_\alpha i_\alpha\neq j_\alpha 1$ for all $\alpha$ will belong to $s(\mathcal{J}_{2,m})$ if and only if
$$E(T_{(j_1i_1,j_2i_2,\cdots,j_mi_m)})=E\setminus\{e_{1i_1},e_{2i_2},\hdots, e_{mi_m}\}$$}
\end{Proposition}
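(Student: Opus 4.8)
The plan is to prove both implications directly from the cutting-down method described after Definition~\ref{spa}. For the forward direction, suppose $E(T_{(j_1i_1,\dots,j_mi_m)})\in s(\mathcal{J}_{2,m})$ with the hypothesis that $j_\alpha i_\alpha\neq j_\alpha 1$ for every $\alpha$; that is, none of the designated deleted edges is a common edge $e_{k1}$. By the cutting-down method, a spanning tree is obtained by removing exactly $m$ edges subject to the three bulleted constraints. Since we are told that no common edge is removed, the second and third bullet constraints are vacuous, and the first bullet (at most one non-common edge removed from each cycle $C_k$) forces the $m$ removed edges to be spread one-per-cycle among the non-common edges $\{e_{k2},e_{k3}\}$, $k=1,\dots,m$. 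Hence each $e_{ki_k}$ with $i_k\in\{2,3\}$ is the unique edge removed from $C_k$, giving exactly $E(T_{(j_1i_1,\dots,j_mi_m)})=E\setminus\{e_{1i_1},\dots,e_{mi_m}\}$. I would make the indexing precise by noting that the $m$ deletions, being non-common and at most one per cycle, biject with the cycles $C_1,\dots,C_m$; that is the only slightly delicate bookkeeping point.

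For the converse, suppose $E(T)=E\setminus\{e_{1i_1},\dots,e_{mi_m}\}$ with each $i_\alpha\in\{2,3\}$. I want to check that $T$ is a spanning tree, i.e.\ that it is connected and acyclic on the $2m+1$ vertices; equivalently (since $|E|=3m$) that deleting these $m$ edges leaves $2m$ edges with no cycle, because a connected graph on $2m+1$ vertices with $2m$ edges is automatically a tree. Acyclicity is the substantive point: by Lemma~\ref{lema1} every cycle of $\mathcal{J}_{2,m}$ is some $C_{i_1,\dots,i_k}$, and each such cycle contains at least one non-common edge from each of the constituent cycles $C_{i_1},\dots,C_{i_k}$ (indeed the edge set of $C_{i_1,\dots,i_k}$ consists of all non-common edges of the $C_{i_j}$ together with certain common edges). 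Since our deleted set $\{e_{1i_1},\dots,e_{mi_m}\}$ contains one non-common edge from \emph{every} cycle $C_1,\dots,C_m$, it meets the edge set of every cycle $C_{i_1,\dots,i_k}$; hence no cycle survives in $T$. Connectivity then follows either from the edge count or directly: removing one non-common edge from each $C_k$ cannot disconnect the graph because the apex vertex and the spokes $e_{k1}$ remain.

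The main obstacle I expect is making the acyclicity argument airtight, specifically the claim that every cycle of $\mathcal{J}_{2,m}$ contains a non-common edge belonging to one of the base cycles $C_1,\dots,C_m$. This should follow cleanly from the explicit description of the cycles $C_{i_1,\dots,i_k}$ in the proof of Lemma~\ref{lema1} (it is built by adjoining consecutive base cycles and deleting the common edges between them, so it retains all the non-common edges of each $C_{i_j}$), but it deserves to be spelled out rather than asserted. Once that structural fact is in hand, both directions are short: the forward direction is a direct reading of the cutting-down constraints, and the converse is the edge-count argument combined with the covering-every-cycle observation. I would close by remarking that the hypothesis $j_\alpha i_\alpha\neq j_\alpha 1$ is exactly what rules out the more complicated spanning trees (those obtained by deleting a common edge and compensating in the larger cycle), which are the subject of the subsequent propositions.
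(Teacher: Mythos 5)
Your proposal is correct and follows essentially the same route as the paper: the forward direction is exactly the paper's appeal to the cutting-down constraints (removing $m$ edges with no common edge among them forces exactly one non-common edge from each cycle $C_1,\dots,C_m$). The only difference is that you make the converse explicit --- acyclicity via the cycle description of Lemma~\ref{lema1} together with the count of $2m$ remaining edges on $2m+1$ vertices --- where the paper simply asserts that such a removal yields a spanning tree, so your version is a tightening of the same argument rather than a different method.
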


\begin{proof}
Since $\mathcal{J}_{2,m}$ is a $m$-cycles graph with cycles $C_1,C_2,\hdots,C_m$ having one edge common in each consecutive cycle and $e_{11},e_{21},\hdots,e_{m1}$ as common edges between consecutive cycles. The cutting down process explains we need to remove exactly $m$ edges, keeping the graph connected and no cycles and no isolated edge left in the graph. Therefore, in order to obtain a spanning tree of $\mathcal{J}_{2,m}$ with none of common edges $e_{11},e_{21},\hdots,e_{m1}$ to be removed, we need to remove exactly one edge from the non common edges from each cycle. This explains the proof of the proposition.

%Now for a spanning tree of $\mathcal{J}_{2,m}$ such that exactly one common edge $e_{j_\alpha 1}$ is removed, we need to remove precisely $m-1$ edges from the remaining edges using the cutting down process. However, we cannot remove more than one edge from the non common edges of the cycle $C_{{(j_{\alpha}-1)1} e_{(j_{\alpha}+1)1}}$ (since this will result a disconnected graph. This explains the proof of case of (2).

\end{proof}
\begin{Proposition}\label{prp7}
\em{A subset $E(T_{(j_1i_1,j_2i_2,\hdots,j_mi_m)}$ of $E$ with $j_\alpha i_\alpha= j_\alpha 1$ for any $\alpha$ will belong to $s(\mathcal{J}_{2,m})$ if and only if
$$E(T_{(j_1i_1,j_2i_2,\hdots,j_mi_m)})=E\setminus\{e_{j_1i_1},e_{j_2i_2},\hdots, e_{j_mi_m}\}$$
 where, $\{e_{j_1i_1},e_{j_2i_2},\hdots, e_{j_mi_m}\}$ will contain exactly one edge from $C_{(j_{\alpha}-1)(j_{\alpha})}\setminus \{ e_{(j_{\alpha}-1)1},\\ e_{(j_{\alpha}+1)1} \}$ other than $e_{j_\alpha 1}$.}
\end{Proposition}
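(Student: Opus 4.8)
The plan is to run the argument through the \emph{cutting-down method} recalled after Definition~\ref{spa}, just as in the proof of Proposition~\ref{prp6}, but now tracking what the deletion of a common (spoke) edge forces. Throughout write $S:=E\setminus E(T_{(j_1i_1,\dots,j_mi_m)})$ for the set of deleted edges. Since $\mathcal J_{2,m}$ has $3m$ edges and $2m+1$ vertices, a spanning subgraph with $2m$ edges is a spanning tree exactly when it is acyclic; hence, using Lemma~\ref{lema1}, one has $E(T_{(j_1i_1,\dots,j_mi_m)})\in s(\mathcal J_{2,m})$ if and only if $|S|=m$ and $S$ meets the edge set of every cycle $C_{i_1,\dots,i_k}$ of $\mathcal J_{2,m}$. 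I would reduce both implications to this ``transversal'' criterion, noting that $E(C_{i_1,\dots,i_k})$ for a cyclic run $R=\{i_1,\dots,i_k\}$ consists precisely of the two terminal spokes of $R$ together with the two non-common edges of each $C_j$, $j\in R$ (consistently with the order $2(k+1)$ from Lemma~\ref{lema1}).

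For ``$\Leftarrow$'', assume $S$ has the stated shape: it contains at least one spoke $e_{j_\alpha1}$, for every deleted spoke $e_{j_\alpha1}$ it contains exactly one partner edge of the $6$-cycle $C_{(j_\alpha-1)(j_\alpha)}$ other than the two terminal spokes $e_{(j_\alpha-1)1},e_{(j_\alpha+1)1}$, and the remaining deleted edges are one non-common edge from each $4$-cycle $C_j$ fused to no neighbour. First I would confirm $|S|=m$ through the bookkeeping $(\text{deleted spokes})+(\text{distinct partners})+(\text{untouched }4\text{-cycles})=m$, using that a partner may be common to two \emph{adjacent} deleted spokes but that no further coincidence can occur. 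Then I would check that $S$ is a transversal: for a cycle $C_R$ either $R$ contains the index of a deleted spoke, so a terminal spoke of $C_R$ lies in $S$, or $C_R$ sits inside a single fused block, resp.\ an untouched $4$-cycle, whence the block's partner, resp.\ that cycle's deleted non-common edge, lies on $C_R$; locating $E(C_R)$ inside the fused blocks is done with the intersection formulas of Propositions~\ref{prp2}, \ref{prp3}, \ref{prp5} and Remark~\ref{prp4}. Together with $|S|=m$ this makes $\mathcal J_{2,m}\setminus S$ acyclic, hence a spanning tree.

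For ``$\Rightarrow$'', assume $E(T)\in s(\mathcal J_{2,m})$ with $S$ containing a spoke $e_{j_\alpha1}$. By the first bullet of the cutting-down method no $4$-cycle loses two non-common edges, and by the second bullet the deletion of $e_{j_\alpha1}$ fuses $C_{j_\alpha-1}$ and $C_{j_\alpha}$ into $C_{(j_\alpha-1)(j_\alpha)}$, which $T$ must still break, so $S$ contains another edge of this $6$-cycle; the terminal spokes $e_{(j_\alpha-1)1},e_{(j_\alpha+1)1}$ are excluded because either of them being deleted is itself a spoke deletion, to be charged against its own fused block and not against $C_{(j_\alpha-1)(j_\alpha)}$. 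Grouping the $m$ deletions into the maximal fused blocks (each absorbing its spoke deletions together with exactly one non-common ``partner'', by the second and first bullets) and the untouched $4$-cycles (each losing exactly one of $e_{j2},e_{j3}$, by the argument of Proposition~\ref{prp6}), and using the third bullet to rule out the degenerate all-spokes case, one obtains $|S|=m$ and the asserted form of $S$.

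The step I expect to be the genuine obstacle is this last grouping when several deleted spokes are \emph{consecutive}: then the naive $6$-cycles $C_{(j_\alpha-1)(j_\alpha)}$ overlap, one partner edge can simultaneously break the fused cycles of two neighbouring spokes, and one must show both that the ``one partner per spoke, possibly shared'' account is forced and that no admissible choice inside a fused block reintroduces a cycle or severs the central vertex from part of the rim. This is exactly the overlapping situation flagged in Remark~\ref{prp4}. The cleanest route I see is an induction on the number of deleted spokes, peeling off one maximal run of consecutive deleted spokes at a time and applying Lemma~\ref{lema1} together with a localised version of Proposition~\ref{prp6} to what remains.
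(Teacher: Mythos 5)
Your argument is correct for what this proposition actually covers, and it reaches the conclusion by a different route than the authors do. As the paper's own proof makes explicit (and as the division of labour with Proposition~\ref{prp8} confirms), Proposition~\ref{prp7} is the case in which \emph{exactly one} common edge $e_{j_\alpha 1}$ is removed. Restricted to that case your plan closes completely: the complement criterion ($\mathcal{J}_{2,m}$ has $3m$ edges and $2m+1$ vertices, so a $2m$-edge spanning subgraph is a tree precisely when the deleted $m$-set meets every cycle listed in Lemma~\ref{lema1}) gives the ``if'' direction by checking that the spoke, the partner edge and the $m-2$ rim deletions hit every $C_{i_1,\dots,i_k}$, and gives ``only if'' by counting: one deletion is the spoke, each cycle $C_j$ with $j\neq j_\alpha-1,j_\alpha$ must lose one of $e_{j2},e_{j3}$ (the deleted spoke is not an edge of these cycles), and $C_{(j_\alpha-1)(j_\alpha)}$ must lose one of its four non-common edges because its terminal spokes survive; this already accounts for all $m$ deletions, so ``exactly one partner'' is forced. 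The paper instead disposes of this in three lines via the cutting-down method and connectivity (removing two non-common edges of the fused cycle disconnects the graph); your acyclicity-plus-transversal count is more rigorous, actually justifies the word ``exactly'', and sits naturally beside the inclusion--exclusion used in Proposition~\ref{fsc}.

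Two remarks. The part of your proposal you flag as the genuine obstacle --- several deleted spokes, possibly consecutive, shared partners, maximal fused blocks, and the induction peeling off runs of spokes --- is out of scope here: that situation is precisely Proposition~\ref{prp8}, where the requirement becomes one extra edge from the single larger fused cycle $C_{j_{r_0}j_{r_1}\cdots j_{r_\rho}}$ rather than one partner per spoke. So the step you defer is not a gap in a proof of Proposition~\ref{prp7}; it would only become one under the reading ``for at least one $\alpha$'', which is not the case the paper treats. Also, in the transversal check the clause ``$R$ contains the index of a deleted spoke, so a terminal spoke of $C_R$ lies in $S$'' is literally false when $j_\alpha\in R$ is not the initial index of the run, since then $e_{j_\alpha 1}$ is not an edge of $C_R$; but in that situation both $j_\alpha-1$ and $j_\alpha$ lie in $R$, so the partner (a rim edge of one of these two cycles) lies on $C_R$, and the verification goes through once the cases are split this way.
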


\begin{proof}
For a spanning tree of $\mathcal{J}_{2,m}$ such that exactly one common edge $e_{j_\alpha 1}$ is removed, we need to remove precisely $m-1$ edges from the remaining edges using the cutting down process. However, we cannot remove more than one edge from the non common edges of the cycle $C_{{(j_{\alpha}-1)1} e_{(j_{\alpha}+1)1}}$ (since this will result a disconnected graph. This explains the proof of the above case.

\end{proof}

\begin{Proposition}\label{prp8}
\em{A subset $E(T_{(j_1i_1,j_2i_2,\hdots,j_mi_m)})\subset E$, where $j_{\alpha} i_{\alpha}= j_{\alpha} 1$ for  $\alpha \in \{r_1,r_2,\hdots,r_{\rho}\}\subset\{1,2,\hdots,m\}$, will belong to $s(\mathcal{J}_{2,m})$ if and only if it satisfies any of the following:
        \begin{enumerate}
            \item\label{8.1} if $e_{j_{r_1}1},e_{j_{(r_2)}1},\hdots, e_{j_{r_{\rho}}1}$ are common edges from consecutive cycles then $$E(T_{(j_1i_1,j_2i_2,\cdots,j_mi_m)})=E\setminus\{e_{j_1i_1},e_{j_2i_2},\hdots, e_{j_mi_m}\}$$ such that $\{e_{j_1i_1},e_{j_2i_2},\hdots, e_{j_mi_m}\}$ will contain exactly exactly one edge from $C_{j_{r_0}j_{r_1}\hdots j_{r_{\rho}}}$ other than $e_{j_{r_1}1},e_{j_{(r_2)}1},\hdots, e_{j_{r_{\rho}}1}$, where $j_{r_0}\rightarrow j_{r_1}$.
            \item\label{8.2} if none of $e_{j_{r_1}1},e_{j_{(r_2)}1},\hdots, e_{j_{r_\rho}1}$ are common edges from consecutive cycles then $$E(T_{(j_1i_1,j_2i_2,\cdots,j_mi_m)})=E\setminus\{e_{j_1i_1},e_{j_2i_2},\hdots, e_{j_mi_m}\}$$ such that for each edge $e_{j_{r_t}1}$ proposition \ref{prp5} holds.
            \item\label{8.3} if some of $e_{j_{r_1}1},e_{j_{(r_2)}1},\hdots, e_{j_{r_\rho}1}$ are common edges from consecutive cycles then $$E(T_{(j_1i_1,j_2i_2,\cdots,j_mi_m)})=E\setminus\{e_{j_1i_1},e_{j_2i_2},\hdots, e_{j_mi_m}\}$$ such that proposition \ref{prp8}.\ref{8.1} is satisfied for the common edges of consecutive cycles and proposition \ref{prp8}.\ref{8.2} is satisfied for remaining common edges.
        \end{enumerate}}

%
%In particular, if we denote the above classes of subsets of $E$ by $\mathcal{C}_{(1)},\mathcal{C}_{(2)},\mathcal{C}_{(3a)},\mathcal{C}_{(3b)},\mathcal{C}_{(3c)}$ respectively then,
%$$ s(\mathcal{J}_{2,m})=\mathcal{C}_{(1)}\bigcup\mathcal{C}_{(2)}\bigcup\mathcal{C}_{(3a)}\bigcup\mathcal{C}_{(3b)}\bigcup\mathcal{C}_{(3c)}$$}
\end{Proposition}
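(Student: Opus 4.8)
The plan is to reduce Proposition~\ref{prp8} to the two previously established cases, Propositions~\ref{prp7} and~\ref{prp8}.\ref{8.1} being the genuinely new ingredient. The guiding principle throughout is the cutting-down method together with Lemma~\ref{lema1}: a spanning tree of $\mathcal{J}_{2,m}$ is obtained by deleting exactly $m$ edges so that the result is connected and acyclic, and the deleted edges must respect the three bullet conditions following Definition~\ref{spa}. Fix the index set $R=\{r_1,\dots,r_\rho\}$ of those $\alpha$ for which the common edge $e_{j_\alpha 1}$ is among the deleted edges.

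First I would treat part~\ref{8.1}, where $e_{j_{r_1}1},\dots,e_{j_{r_\rho}1}$ are the common edges of a block of $\rho$ consecutive cycles $C_{j_{r_0}},C_{j_{r_1}},\dots,C_{j_{r_\rho}}$ (with $j_{r_0}\rightarrow j_{r_1}$). Deleting all these common edges merges that block into the single big cycle $C_{j_{r_0}j_{r_1}\cdots j_{r_\rho}}$, which by Lemma~\ref{lema1} has length $2(\rho+1)$. For the deletion to yield an acyclic connected subgraph we must still destroy this big cycle, which (by the second bullet of the cutting-down method) forces us to delete exactly one further edge from $C_{j_{r_0}j_{r_1}\cdots j_{r_\rho}}$, and this edge cannot be one of the already-deleted common edges; conversely any such choice keeps the graph connected since we removed only $\rho+1$ edges total from a subgraph on $2(\rho+1)$ vertices forming a cycle, i.e.\ we leave a spanning tree of that block. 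Outside the block, the remaining $m-(\rho+1)$ cycles each still require exactly one non-common edge removed, exactly as in Proposition~\ref{prp6}. Splicing these together gives precisely the stated description, and the ``only if'' direction follows by running the same count backwards: if $E\setminus\{e_{j_1i_1},\dots,e_{j_mi_m}\}$ is a spanning tree then a counting argument on $C_{j_{r_0}j_{r_1}\cdots j_{r_\rho}}$ shows exactly one $e_{j_\alpha i_\alpha}$ beyond the common edges must lie on it.

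For part~\ref{8.2}, the hypothesis is that no two of the deleted common edges belong to consecutive cycles, so deleting them does \emph{not} merge any pair of cycles into a larger cycle; each removed common edge $e_{j_{r_t}1}$ sits in isolation. Then the local obstruction around each such edge is exactly the configuration analyzed in Proposition~\ref{prp5} (two adjoining cycles sharing, or not sharing, a consecutive neighbour), so the condition is that Proposition~\ref{prp5} holds at each $e_{j_{r_t}1}$; everywhere else one argues as in Proposition~\ref{prp6}. Part~\ref{8.3} is then purely bookkeeping: partition $R$ into maximal runs of indices giving common edges of consecutive cycles versus the isolated ones, apply part~\ref{8.1} to each run and part~\ref{8.2}-style reasoning to the isolated ones, and observe that the connectivity and acyclicity conditions decouple across the resulting blocks because distinct blocks share no cycle.

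The main obstacle is the ``only if'' direction of part~\ref{8.1}: one must show that \emph{every} spanning tree deleting the common edges of a consecutive block deletes exactly one additional edge \emph{inside} the merged big cycle $C_{j_{r_0}j_{r_1}\cdots j_{r_\rho}}$ (not, say, two edges there and none elsewhere, or an edge in an unrelated cycle). This is where I would be most careful: the argument is a rank/Euler-characteristic count — the block contributes a subgraph which, after removing the $\rho$ common edges, is a single cycle on $2(\rho+1)$ vertices, and a spanning tree restricted to it must be a spanning tree of that cycle, hence misses exactly one of its edges — combined with the observation that the $m$ deleted edges are forced to distribute as ``one per independent cycle'' by the dimension of the cycle space, which Lemma~\ref{lema1} pins down as $m$. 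Everything else is routine assembly from Propositions~\ref{prp2}--\ref{prp7}.
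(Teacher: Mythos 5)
Your proposal follows essentially the same route as the paper: the cutting-down method, the observation that deleting the $\rho$ consecutive common edges merges the block $C_{j_{r_0}},C_{j_{r_1}},\hdots,C_{j_{r_\rho}}$ into the single big cycle from which exactly one further edge must be deleted, and reduction of parts (2) and (3) to the earlier propositions, with your Euler-characteristic count merely making explicit the ``only if'' step the paper leaves informal. One small slip: by Lemma \ref{lema1} the merged cycle $C_{j_{r_0}j_{r_1}\hdots j_{r_\rho}}$ has $\rho+1$ constituent cycles and hence length $2(\rho+2)$, not $2(\rho+1)$ (and correspondingly $2\rho+4$ vertices in the block), though this does not affect the structure of your argument.
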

\begin{proof}
For the case \ref{8.1}, we need to obtain a spanning tree of $\mathcal{J}_{2,m}$ such that $|r_{\rho}-r_1|_m$ common edges must be removed from $\rho$ consecutive cycles $C_{j_{r_1}},C_{j_{r_2}},\hdots,C_{j_{r_\rho}}$. The remaining $m-|r_\rho-r_1|_m$ edges must be removed in such a way that exactly one edge is removed from the non common edges of the adjoining cycles $C_{j_{r_0}},C_{j_{r_1}},\hdots ,C_{j_{r_\rho}}$ and the remaining $m-|r_\rho-r_1|_m$ cycles of the graph $\mathcal{J}_{2,m}$. This concludes the case.

The remaining cases of the proposition can be visualised in similar manner using the propositions \ref{prp6} and \ref{prp7}. This completes the proof.
\end{proof}

\begin{Remark}\label{rmk9}
If we denote the disjoint classes of subsets of $E$ discussed in propositions \ref{prp6},\ref{prp7} and \ref{prp8} by $\mathcal{C_J}_{1},\mathcal{C_J}_{2},\mathcal{C_J}_{3a},\mathcal{C_J}_{3b},\mathcal{C_J}_{3c}$ respectively, then, we can write $s(\mathcal{J}_{2,m})$ as follows:
$$ s(\mathcal{J}_{2,m})=\mathcal{C_J}_{1}\bigcup\mathcal{C_J}_{2}\bigcup\mathcal{C_J}_{3a}\bigcup\mathcal{C_J}_{3b}\bigcup\mathcal{C_J}_{3c}$$
\end{Remark}
In our next result, we give an important characterization of the $f$-vectors of $\Delta_s(\mathcal{J}_{2,m})$.

\begin{Proposition}\label{fsc}
  \em{Let $\Delta_s(\mathcal{J}_{2,m})$ be a spanning simplicial complex of the graph $\mathcal{J}_{2,m}$, then the $dim(\Delta_s(\mathcal{J}_{2,m}))=2m-1$ with $f-$vector $f(\Delta_s(\mathcal{J}_{2,m}))=(f_0,f_1,\cdots,f_{2m-1})$ and\\
$  f_i=\left(
       \begin{array}{c}
         3m \\
         i+1 \\
       \end{array}
     \right)+\sum\limits_{t=1}^{\tau}(-1)^t
\left[
                             \begin{array}{c}
                               {\sum\limits_{\{ i_{1},i_2,\hdots,i_t\}\in C_{I}^t} \left(
                                       \begin{array}{c}
                                         3m-\sum\limits_{s=1}^{t} \beta_{i_{s}}+\sum\limits_{\{i_u,i_v\}\subseteq\{i_p\}_{p=1}^{t}}\big|C_{i_{u}}\bigcap C_{i_{v}}\big| \\
                                         i+1-\sum\limits_{s=1}^{t} \beta_{i_{s}}+\sum\limits_{\{i_u,i_v\}\subseteq\{i_p\}_{p=1}^{t}}\big|C_{i_{u}}\bigcap C_{i_{v}}\big|\\
                                       \end{array}
                                     \right)}\\
                             \end{array}
                           \right]$
\\where $0\le i\le 2m-1$\\
$I=\{{i_1i_2\hdots i_k}|i_j\in\{1,2,\hdots,m\}\; and \; 1 \le k\le m\\
\;such\;that\;i_{j+1}=i_j+1\; if\; i_j\neq m\; and\; i_{j+1}=1\; if\; i_j= m \}$ and
\\$C_{I}^t=\{Subsets\;of\;I\;of\;cardinality\;t \}.$}
\end{Proposition}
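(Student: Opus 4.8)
The plan is to recognise $\Delta_s(\mathcal{J}_{2,m})$ as the independence complex of the graphic matroid of $\mathcal{J}_{2,m}$ and then compute its face numbers by a sieve over the cycles catalogued in Lemma \ref{lema1}. Since $\mathcal{J}_{2,m}$ is connected with $2m+1$ vertices and $|E|=3m$ edges, every spanning tree has exactly $2m$ edges; hence all facets of $\Delta_s(\mathcal{J}_{2,m})$ have cardinality $2m$, the complex is pure, and $\dim\Delta_s(\mathcal{J}_{2,m})=2m-1$. Moreover a subset $F\subseteq E$ is a face precisely when it lies in some spanning tree, and, because every acyclic set of edges of a connected graph extends to a spanning tree, this happens if and only if $F$ contains the edge set of no cycle of $\mathcal{J}_{2,m}$. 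Consequently $f_i$ is the number of $(i+1)$-subsets of $E$ containing no cycle.

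Next I would apply inclusion--exclusion to the forbidden events $A_C=\{F:\ E(C)\subseteq F\}$, one for each cycle $C$. By Lemma \ref{lema1} these cycles are exactly the $\tau=m^2$ cycles $C_{i_1,\dots,i_k}$; writing $U=\bigcup_{s=1}^{t}E(C_{i_s})$ for a choice of $t$ of them and using that the number of $(i{+}1)$-subsets of $E$ containing a fixed set $U$ is $\binom{3m-|U|}{\,i+1-|U|\,}$ (with the convention $\binom{a}{b}=0$ unless $0\le b\le a$), one obtains
$$f_i=\binom{3m}{i+1}+\sum_{t=1}^{\tau}(-1)^{t}\sum_{\{i_1,\dots,i_t\}\in C_I^{t}}\binom{3m-|U|}{\,i+1-|U|\,}.$$
For $i$ outside $\{0,1,\dots,2m-1\}$ every term of this sum vanishes, so it remains only to evaluate $|U|$.

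The remaining, and genuinely delicate, step is to prove that for every family $\{C_{i_1},\dots,C_{i_t}\}$ of these cycles
$$|U|=\sum_{s=1}^{t}\beta_{i_s}-\sum_{\{i_u,i_v\}\subseteq\{i_p\}_{p=1}^{t}}\bigl|C_{i_u}\cap C_{i_v}\bigr|,$$
that is, that the inclusion--exclusion expansion of $|U|$ closes at the pairwise level; substituting this into the previous display yields exactly the asserted formula. Here $\beta_{i_s}=|E(C_{i_s})|$ is supplied by Lemma \ref{lema1}, and each pairwise overlap $|C_{i_u}\cap C_{i_v}|$ is evaluated by the case analysis of Propositions \ref{prp2}, \ref{prp3} and \ref{prp5}, with Remark \ref{prp4} reducing the general situation to overlapping runs. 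Establishing the displayed identity for $|U|$ is where I expect the effort to go: each $C_{i_1,\dots,i_k}$ is the union of the consecutive basic cycles $C_{i_1},\dots,C_{i_k}$ with their $k-1$ interior spokes deleted, so one must analyse exactly how several such runs can share a common edge and verify that the higher-order terms of the sieve for $|U|$ do not survive --- the combinatorial bookkeeping being that already organised in Propositions \ref{prp6}--\ref{prp8} and Remark \ref{rmk9}. As a final check I would test the resulting formula against the explicit facet list of $\Delta_s(\mathcal{J}_{2,3})$ recorded in Section 2.
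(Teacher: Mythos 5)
Your overall route is the same as the paper's: purity and $\dim=2m-1$ from the fact that every spanning tree of the connected graph on $2m+1$ vertices has $2m$ edges, the identification of the faces of $\Delta_s(\mathcal{J}_{2,m})$ with the cycle-free subsets of $E$ (every forest extends to a spanning tree), and then an inclusion--exclusion over the $\tau$ cycles listed in Lemma \ref{lema1}. Up to that point your argument matches the paper's proof step for step.

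The difference is the step you yourself single out as the delicate one, namely the identity $|U|=\sum_{s=1}^{t}\beta_{i_s}-\sum_{\{i_u,i_v\}}|C_{i_u}\cap C_{i_v}|$ for the union $U$ of any $t$ of these cycles. You leave it unproved, and in fact it cannot be proved as stated: the sieve for $|U|$ does not close at the pairwise level once three of the chosen cycles share an edge. Concretely, in $\mathcal{J}_{2,3}$ take $C_1=\{e_{11},e_{12},e_{13},e_{21}\}$, $C_2=\{e_{21},e_{22},e_{23},e_{31}\}$ and $C_{2,3}=\{e_{11},e_{21},e_{22},e_{23},e_{32},e_{33}\}$, which all contain the spoke $e_{21}$. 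Here $\sum\beta=14$, the pairwise intersections have sizes $1,2,3$, so your formula gives $|U|=14-6=8$, whereas the true union has $9$ edges (the triple intersection $\{e_{21}\}$ survives). So the exponent $3m-\sum\beta_{i_s}+\sum|C_{i_u}\cap C_{i_v}|$ is not $3m-|U|$ in general, and the displayed expression for $f_i$ does not follow from your sieve without either retaining the higher-order intersection terms or reorganizing the count. To be fair, this is exactly the point the paper's own proof also passes over in silence: it substitutes the same pairwise-truncated exponent into the binomial coefficients without justification, so your proposal is no less complete than the published argument --- but as a proof it has a genuine gap at precisely the place you flagged, and the flagged identity is false rather than merely unproven.
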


\begin{proof}
  Let $E$ be the edge set of $\mathcal{J}_{2,m}$ and $\mathcal{C_J}_{1},\mathcal{C_J}_{2},\mathcal{C_J}_{3a},\mathcal{C_J}_{3b},\mathcal{C_J}_{3c}$ are disjoint classes of spanning trees of $\mathcal{J}_{2,m}$ then from propositions \ref{prp6}, \ref{prp7}, \ref{prp8} and the remark \ref{rmk9} we have

$$ s(\mathcal{J}_{2,m})=\mathcal{C_J}_{1}\bigcup\mathcal{C_J}_{2}\bigcup\mathcal{C_J}_{3a}\bigcup\mathcal{C_J}_{3b}\bigcup\mathcal{C_J}_{3c}$$

Therefore, by definition \ref{ssc} we can write\\
$ \Delta_s (\mathcal{J}_{2,m})=\Big\langle\mathcal{C_J}_{1}\bigcup\mathcal{C_J}_{2}\bigcup\mathcal{C_J}_{3a}\bigcup\mathcal{C_J}_{3b}\bigcup\mathcal{C_J}_{3c}\Big\rangle$
Since each facet $\hat{E}_{(j_1i_1,j_2i_2,\hdots,j_mi_m)}=E(T_{(j_1i_1,j_2i_2,\hdots,j_mi_m)})$ is obtained by deleting exactly $m$ edges from the edge set of $\mathcal{J}_{2,m}$, keeping in view the propositions \ref{prp6}, \ref{prp7} and \ref{prp8}, therefore dimension of each facet is same i.e., $2m-1$ ( since $ |\hat{E}_{(j_1i_1,j_2i_2,\hdots,j_mi_m)}|=2m$ ) and hence dimension of $\Delta_s(\mathcal{J}_{2,m})$ will be $2m-1$.\\
Also it is clear from the definition of $\Delta_s(\mathcal{J}_{2,m})$ that it contains all those subsets of $E$ which do not contain the given sets of cycles  $\{e_{k1},e_{k2},e_{k3},e_{(k+1)1}\}$ for $k\in\{1,2,\hdots,m-1\}$ and $\{e_{m1},e_{m2},e_{m3},e_{11}\}$  in graph as well as any other cycle in the graph $\mathcal{J}_{2,m}$.\\

Now by lemma \ref{lema1} the total cycles in the graph $\mathcal{J}_{2,m}$ are $$C_{i_1,i_2,\hdots,i_k}\;\;\;i_j\in\{1,2,\hdots,m\}\; and \; 1 \le k\le m,$$
such that $i_{j+1}=i_j+1$ if $i_j\neq m$ and $i_{j+1}=1$ if $i_j= m$, and their total number is $\tau$. Let $F$ be any subset of $E$ of order $i+1$ such that it does not contain any $C_{i_1,i_2,\hdots,i_k}\;\;\;i_j\in\{1,2,\hdots,m\}\; and \; 1 \le k\le m$, in it. The total number of such $F$ is indeed $f_i$ for $0\le i\le 2m-1$. We use inclusion exclusion principle to find this number. Therefore,\\

$f_i=$ Total number of subsets of $E$ of order $i+1$ not containing $C_{i_1,i_2,\hdots,i_k}\;\;\;i_j\in\{1,2,\hdots,m\}\; and \; 1 \le k\le m$ such that $i_{j+1}=i_j+1$ if $i_j\neq m$ and $i_{j+1}=1$ if $i_j= m$.\\

Therefore, using these notations and applying Inclusion Exclusion Principle we can write,\\

$f_i=\Big( $ Total number of subsets of $E$ of order $i+1\Big)- \sum\limits_{\{ i_{1}\}\in C_{I}^{1}}\Big($ subset of $E$  of order $i+1$ containing $C_{i_s}$ for $s=1\Big)+\sum\limits_{\{ i_{1},i_2\}\in C_{I}^{2}}\Big($ subset of $E$ of order $i+1$ containing both $C_{i_{s}}$ for both $1\le s\le2 \Big)- \cdots +(-1)^{\tau}\sum\limits_{\{ i_{1},i_2,\hdots,i_{\tau}\}\in C_{I}^{\tau}}\Big($subset of $E$ of order $i+1$ simultaneously containing each $C_{i_{s}}$ for all $1\le s\le\tau\Big)$
\\
\\This implies\\
$f_i=\left(
       \begin{array}{c}
         3m \\
         i+1 \\
       \end{array}
     \right)-\Big[\begin{array}{c}
                  \sum\limits_{\{ i_{1}\}\in C_{I}^{1}}
                                       \left(
                                       \begin{array}{c}
                                         3m-\beta_{i_{1}}\\
                                         i+1-\beta_{i_{1}}\\
                                       \end{array}
                                     \right) \\
                \end{array}
     \Big]+\\ \left[
                \begin{array}{c}
                  \sum\limits_{\{ i_{1},i_2\}\in C_{I}^{2}}
                                       \left(
                                       \begin{array}{c}
                                         3m-\sum\limits_{s=1}^{2} \beta_{i_{s}}+\sum\limits_{\{i_u,i_v\}\subseteq\{i_p\}_{p=1}^{2}}\big|C_{i_{u}}\bigcap C_{i_{v}}\big| \\
                                         i+1-\sum\limits_{s=1}^{2} \beta_{i_{s}}+\sum\limits_{\{i_u,i_v\}\subseteq\{i_p\}_{p=1}^{2}}\big|C_{i_{u}}\bigcap C_{i_{v}}\big| \\
                                       \end{array}
                                     \right) \\
                \end{array}
              \right]\\
 - \cdots+(-1)^{\tau}\\
\left[
                             \begin{array}{c}
                               {\sum\limits_{\{ i_{1},i_2,\hdots,i_{\tau}\}\in C_{I}^{\tau}} \left(
                                       \begin{array}{c}
                                         3m-\sum\limits_{s=1}^{\tau} \beta_{i_{s}}+\sum\limits_{\{i_u,i_v\}\subseteq\{i_p\}_{p=1}^{\tau}}\big|C_{i_{u}}\bigcap C_{i_{v}}\big| \\
                                         i+1-\sum\limits_{s=1}^{\tau} \beta_{i_{s}}+\sum\limits_{\{i_u,i_v\}\subseteq\{i_p\}_{p=1}^{\tau}}\big|C_{i_{u}}\bigcap C_{i_{v}}\big| \\
                                       \end{array}
                                     \right)}\\
                             \end{array}
                           \right]$
\\This implies\\
$  f_i=\left(
       \begin{array}{c}
         3m \\
         i+1 \\
       \end{array}
     \right)+\sum\limits_{t=1}^{\tau}(-1)^t
\left[
                             \begin{array}{c}
                               {\sum\limits_{\{ i_{1},i_2,\hdots,i_t\}\in C_{I}^t} \left(
                                       \begin{array}{c}
                                         3m-\sum\limits_{s=1}^{t} \beta_{i_{s}}+\sum\limits_{\{i_u,i_v\}\subseteq\{i_p\}_{p=1}^{t}}\big|C_{i_{u}}\bigcap C_{i_{v}}\big| \\
                                         i+1-\sum\limits_{s=1}^{t} \beta_{i_{s}}+\sum\limits_{\{i_u,i_v\}\subseteq\{i_p\}_{p=1}^{t}}\big|C_{i_{u}}\bigcap C_{i_{v}}\big|\\
                                       \end{array}
                                     \right)}\\
                             \end{array}
                           \right]$

\end{proof}

\begin{cor}
Let $\Delta_s (\mathcal{J}_{2,3})$ be a spanning simplicial complex of the Jahangir's graph $\mathcal{J}_{2,m}$ given in Figure 1, then the $dim(\Delta_s (\mathcal{J}_{2,3}))=5$ and $\tau=3^2=9$. Therefore, $f-$vectors $f(\Delta_s (\mathcal{J}_{2,3}))=(f_0,f_1,\hdots,f_5)$ and
\\
$f_i=\left(
       \begin{array}{c}
         9 \\
         i+1 \\
       \end{array}
     \right)-\Big[\begin{array}{c}
                  \sum\limits_{\{ i_{1}\}\in C_{I}^{1}}
                                       \left(
                                       \begin{array}{c}
                                         9-\beta_{i_{1}}\\
                                         i+1-\beta_{i_{1}}\\
                                       \end{array}
                                     \right) \\
                \end{array}
     \Big]+\\ \left[
                \begin{array}{c}
                  \sum\limits_{\{ i_{1},i_2\}\in C_{I}^{2}}
                                       \left(
                                       \begin{array}{c}
                                         9-\sum\limits_{s=1}^{2} \beta_{i_{s}}+\sum\limits_{\{i_u,i_v\}\subseteq\{i_p\}_{p=1}^{2}}\big|C_{i_{u}}\bigcap C_{i_{v}}\big| \\
                                         i+1-\sum\limits_{s=1}^{2} \beta_{i_{s}}+\sum\limits_{\{i_u,i_v\}\subseteq\{i_p\}_{p=1}^{2}}\big|C_{i_{u}}\bigcap C_{i_{v}}\big| \\
                                       \end{array}
                                     \right) \\
                \end{array}
              \right]\\
 - \cdots+(-1)^{9}\\
\left[
                             \begin{array}{c}
                               {\sum\limits_{\{ i_{1},i_2,\hdots,i_{9}\}\in C_{I}^{9}} \left(
                                       \begin{array}{c}
                                         3m-\sum\limits_{s=1}^{9} \beta_{i_{s}}+\sum\limits_{\{i_u,i_v\}\subseteq\{i_p\}_{p=1}^{9}}\big|C_{i_{u}}\bigcap C_{i_{v}}\big| \\
                                         i+1-\sum\limits_{s=1}^{9} \beta_{i_{s}}+\sum\limits_{\{i_u,i_v\}\subseteq\{i_p\}_{p=1}^{9}}\big|C_{i_{u}}\bigcap C_{i_{v}}\big| \\
                                       \end{array}
                                     \right)}\\
                             \end{array}
                           \right]$
\\where $0\le i\le 5.$
\end{cor}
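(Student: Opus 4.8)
The plan is to specialize Proposition \ref{fsc} to the value $m = 3$. First I would record that $\dim\bigl(\Delta_s(\mathcal{J}_{2,3})\bigr) = 2m - 1 = 5$, so the $f$-vector has the shape $(f_0, f_1, \ldots, f_5)$, and that by Lemma \ref{lema1} the total number of cycles of $\mathcal{J}_{2,3}$ is $\tau = m^2 = 9$. Substituting $3m = 9$ and $\tau = 9$ into the displayed formula for $f_i$ of Proposition \ref{fsc} then yields the asserted expression verbatim, for $0 \le i \le 5$; at this level the corollary is nothing more than a substitution.

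To turn the formula into explicit numbers one would next list the nine cycles of $\mathcal{J}_{2,3}$ guaranteed by Lemma \ref{lema1}: the three short cycles $C_1, C_2, C_3$ with $\beta_1 = \beta_2 = \beta_3 = 4$; the three two-fold cycles $C_{1,2}, C_{2,3}, C_{3,1}$ with $\beta_{1,2} = \beta_{2,3} = \beta_{3,1} = 6$; and the three three-fold cycles $C_{1,2,3}, C_{2,3,1}, C_{3,1,2}$, each of length $8$. The pairwise intersection cardinalities $|C_{i_u} \cap C_{i_v}|$ entering each inner sum are read off from Propositions \ref{prp2}, \ref{prp3} and \ref{prp5} (for instance $|C_1 \cap C_2| = 1$, since two consecutive short cycles share exactly one edge). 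Feeding these data into the alternating sum and discarding every binomial $\binom{a}{b}$ with $b < 0$ or $b > a$, one evaluates $f_0, \ldots, f_5$; as a consistency check one confirms that $f_5$ equals the number of facets of $\Delta_s(\mathcal{J}_{2,3})$ listed after Definition \ref{ssc}.

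I do not anticipate a genuine obstacle, since the statement is a direct instance of Proposition \ref{fsc}. The only delicate point is the bookkeeping of which binomial terms actually survive — for $t$ close to $9$ almost all of them collapse to zero — which is routine but error-prone.
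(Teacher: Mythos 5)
Your proposal is correct and matches the paper's treatment: the corollary is stated there without a separate proof precisely because it is the direct specialization $m=3$ (so $3m=9$, $\dim = 2m-1 = 5$, $\tau = m^2 = 9$) of Proposition \ref{fsc}, which is exactly your first paragraph. Your additional remarks on listing the nine cycles, their lengths $4,6,8$, and the pairwise intersections are a sensible (and correct) way to evaluate the formula numerically, but they go beyond what the paper does and are not needed for the corollary itself.
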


For a simplicial complex $\Delta$ over $[n]$, one would associate to it the
Stanley-Reisner ideal, that is, the monomial ideal $I_{\mathcal N}(\Delta)$ in
$S=k[x_1, x_2,\ldots ,x_n]$ generated by monomials corresponding to
non-faces of this complex (here we are assigning one variable of the
polynomial ring to each vertex of the complex). It is well known
that the Face ring $k[\Delta]=S/I_{\mathcal N}(\Delta)$
is a standard graded algebra. We refer the readers to \cite{HP} and
\cite{Vi} for more details about graded algebra $A$, the Hilbert
function $H(A,t)$ and the Hilbert series $H_t(A)$ of a graded algebra.

Our main result of this section is as follows;
\begin{Theorem}\label{Hil} {\em Let $\Delta_s(\mathcal{J}_{2,m}) $ be the spanning simplicial complex of
$\mathcal{J}_{2,m}$, then the Hilbert series of the Face ring
$k\big[\Delta_s(\mathcal{J}_{2,m})\big]$ is given by,\\
$H(k[\Delta_s(\mathcal{J}_{2,m})],t)=1+\sum\limits_{i=0}^{d}\frac{{n\choose
{i+1}}{t^{i+1}}}{(1-t)^{i+1}}+\sum\limits_{i=0}^{d}\sum\limits_{k=1}^{\tau}(-1)^k\\
\tiny{
\left[
                             \begin{array}{c}
                               {\sum\limits_{\{ i_{1},i_2,\hdots,i_k\}\in C_{I}^k} \left(
                                       \begin{array}{c}
                                         3m-\sum\limits_{s=1}^{k} \beta_{i_{s}}+\sum\limits_{\{i_u,i_v\}\subseteq\{i_p\}_{p=1}^{k}}\big|C_{i_{u}}\bigcap C_{i_{v}}\big| \\
                                         i+1-\sum\limits_{s=1}^{k} \beta_{i_{s}}+\sum\limits_{\{i_u,i_v\}\subseteq\{i_p\}_{p=1}^{k}}\big|C_{i_{u}}\bigcap C_{i_{v}}\big|\\
                                       \end{array}
                                     \right)}\\
                             \end{array}
                           \right]} \frac{t^{i+1}}{(1-t)^{i+1}}$}
\end{Theorem}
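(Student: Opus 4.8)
The plan is to reduce everything to the standard description of the Hilbert series of a Stanley--Reisner ring in terms of the $f$-vector, and then simply feed in the formula already established in Proposition \ref{fsc}.

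First I would recall (see \cite{BH} or \cite{MS}) that if $\Delta$ is a simplicial complex on the vertex set $[n]$ with $\dim \Delta = d$ and $f$-vector $f(\Delta) = (f_0,f_1,\ldots,f_d)$, then the face ring $k[\Delta] = S/I_{\mathcal N}(\Delta)$, with $S = k[x_1,\ldots,x_n]$, has Hilbert series
$$H(k[\Delta],t) = \sum_{i=-1}^{d} \frac{f_i\, t^{i+1}}{(1-t)^{i+1}},$$
where the convention $f_{-1} = 1$ records the empty face. The reason is the $\ZZ$-graded decomposition of $k[\Delta]$ according to the support of a monomial: for a fixed face $F \in \Delta$ of dimension $i$, the monomials whose support equals $F$ contribute exactly $t^{i+1}/(1-t)^{i+1}$, and there are $f_i$ such faces. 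I would state this as the one external ingredient.

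Next I apply this with $\Delta = \Delta_s(\mathcal{J}_{2,m})$. Here $n = 3m$ (the edges of $\mathcal{J}_{2,m}$, which are the vertices of the complex) and, by Proposition \ref{fsc}, $d = \dim \Delta_s(\mathcal{J}_{2,m}) = 2m-1$. Peeling off the $i=-1$ summand, which equals $f_{-1}\,t^0/(1-t)^0 = 1$, gives
$$H(k[\Delta_s(\mathcal{J}_{2,m})],t) = 1 + \sum_{i=0}^{2m-1} \frac{f_i\, t^{i+1}}{(1-t)^{i+1}}.$$
Substituting the closed form for $f_i$ from Proposition \ref{fsc}: the leading term $\binom{3m}{i+1}$ yields the first sum $\sum_{i=0}^{d}\binom{n}{i+1}t^{i+1}/(1-t)^{i+1}$, and the inclusion--exclusion correction terms, multiplied by $t^{i+1}/(1-t)^{i+1}$ and summed over $i$, reproduce verbatim the asserted double sum $\sum_{i=0}^{d}\sum_{k=1}^{\tau}(-1)^k[\cdots]\,t^{i+1}/(1-t)^{i+1}$. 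Since everything in sight is a finite sum, interchanging the order of summation over $i$ and over the cycle-index subsets $C_I^k$ is harmless, and the stated identity follows.

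The one place that needs care — and what I would flag as the main (though modest) obstacle — is the bookkeeping of index ranges: one must use throughout the usual convention $\binom{a}{b} = 0$ for $b < 0$ or $b > a$, so that the correction terms in $f_i$ are genuinely supported on $0 \le i \le 2m-1$ and nothing spurious is created or destroyed when the summation order is rearranged. Granting this, the theorem is an immediate corollary of Proposition \ref{fsc} together with the standard Hilbert-series formula; as a sanity check one can also specialize to $\mathcal{J}_{2,3}$, where $n = 9$, $d = 5$, $\tau = 9$, recovering the displayed corollary.
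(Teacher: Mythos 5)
Your proposal is correct and follows essentially the same route as the paper: invoke the standard formula $H(k[\Delta],t)=1+\sum_{i=0}^{d} f_i\,t^{i+1}/(1-t)^{i+1}$ for the face ring of a $d$-dimensional complex (the paper cites \cite{Vi}) and then substitute the expression for $f_i$ from Proposition \ref{fsc} with $n=3m$ and $d=2m-1$. Your extra remarks on the grading argument behind the formula and the convention $\binom{a}{b}=0$ for out-of-range indices are fine but not needed beyond what the paper does.
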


\begin{proof}
From \cite{Vi}, we know that if $\Delta$ is a simplicial complex of
dimension $d$ and $f(\Delta)=(f_0, f_1, \ldots,f_d)$ its $f$-vector,
then the Hilbert series of the face ring $k[\Delta]$ is given
by $$H(k[\Delta],t)= 1+\sum_{i=0}^{d}\frac{f_i
t^{i+1}}{(1-t)^{i+1}}.$$ By substituting the values of $f_i$'s from
Proposition \ref{fsc} in this above expression, we get the desired result.
\end{proof}

\section{Cohen-Macaulayness of the face ring of $\Delta_s(\mathcal{J}_{2,m})$}
In this section, we present the Cohen-Macaulayness of the face ring of SSC $\Delta_s(\mathcal{J}_{2,m})$, using the notions and results from \cite{AR}.
\begin{Definition}\label{qlq}\cite{AR}\\
\em{Let $I\subset S=k[x_1,x_2,\hdots,x_n]$ be a monomial ideal, we say that $I$ will have the \textit{qausi-linear quotients}, if there exists a minimal monomial system of generators $m_1,m_2,\hdots,m_r$ such that $mindeg(\hat{I}_{m_i})=1$ for all $1<i\le r$, where
$$\hat{I}_{m_i}=(m_1,m_2,\hdots,m_{i-1}):(m_i).$$}
\end{Definition}
\begin{Theorem}\cite{AR}
\em{Let $\Delta$ be a pure simplicial complex of dimension $d$ over $[n]$. Then $\Delta$ will be a shellable simplicial complex if and only if $I_{\mathcal{F}}(\Delta)$ will have the qausi-linear quotients.}
\end{Theorem}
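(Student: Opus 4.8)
The plan is to prove the two implications separately, working throughout with the facet ideal $I_{\mathcal F}(\Delta)=(x_{F_1},\dots,x_{F_r})$, where $F_1,\dots,F_r$ are the facets of $\Delta$ and $x_F=\prod_{j\in F}x_j$. Since $\Delta$ is pure of dimension $d$ every generator $x_{F_i}$ has degree $d+1$, and since the facets are pairwise incomparable, $\{x_{F_1},\dots,x_{F_r}\}$ is automatically a minimal monomial system of generators. The first step I would record is a translation: for a monomial ideal one has $\hat I_{x_{F_i}}=(x_{F_1},\dots,x_{F_{i-1}}):(x_{F_i})=(x_{F_k\setminus F_i}\;:\;1\le k<i)$, and purity gives $|F_k\setminus F_i|=|F_i\setminus F_k|$; hence $\operatorname{mindeg}(\hat I_{x_{F_i}})=\min_{k<i}|F_i\setminus F_k|$. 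Thus the quasi-linear quotients condition of Definition \ref{qlq} for the order $x_{F_1},\dots,x_{F_r}$ is equivalent to the purely combinatorial statement: for every $i>1$ there is some $k<i$ with $F_i\cap F_k$ a face of codimension one in $F_i$.

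For the implication ``shellable $\Rightarrow$ quasi-linear quotients'' I would fix a shelling order $F_1,\dots,F_r$ of the facets and claim the induced order of generators witnesses the property. By the defining property of a shelling of a pure complex, for each $i\ge 2$ the subcomplex $\langle F_1,\dots,F_{i-1}\rangle\cap\langle F_i\rangle$ is a nonempty pure subcomplex of the boundary $\partial\langle F_i\rangle$ of dimension $d-1$; in particular it has a facet of the form $F_i\setminus\{v\}$, so $F_i\setminus\{v\}\subseteq F_k$ for some $k<i$, and purity forces $|F_i\setminus F_k|=1$. By the translation above this gives $\operatorname{mindeg}(\hat I_{x_{F_i}})\le 1$, while minimality of the generators (no $x_{F_k}$ divides $x_{F_i}$) gives $\operatorname{mindeg}(\hat I_{x_{F_i}})\ge 1$; hence equality holds for all $i>1$, and $I_{\mathcal F}(\Delta)$ has quasi-linear quotients.

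For the converse I would start from an order $x_{F_1},\dots,x_{F_r}$ realizing the quasi-linear quotients property and argue that, after refining it if necessary, $F_1,\dots,F_r$ is a shelling of $\Delta$. The point is to upgrade ``$F_i$ meets \emph{some} earlier facet in codimension one'' to the full shelling requirement that $\langle F_1,\dots,F_{i-1}\rangle\cap\langle F_i\rangle$ be \emph{pure} of dimension $d-1$, i.e.\ that every face $F_i\cap F_j$ with $j<i$ be contained in a face $F_i\cap F_k$ with $k<i$ and $|F_i\cap F_k|=d$. I expect this to be the main obstacle. I would attack it by induction on $i$ together with an interchange argument: whenever some $j<i$ violates the condition, pull $F_i$ forward past $F_j$ in the list, verify that the quasi-linear quotients property is preserved (using that $F_i$ still retains an earlier codimension-one neighbour and that moving $F_i$ to the left cannot raise $\operatorname{mindeg}$ of the colon ideals to its right), and check that a suitable monovariant — for instance the number of violating pairs — strictly decreases, so that the procedure terminates at a genuine shelling order. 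An alternative route is to invoke the characterization of shellability of a pure complex in terms of linear quotients of its facet ideal, reducing the task to showing that for such ideals the weaker quasi-linear quotients property can always be re-ordered into honest linear quotients.
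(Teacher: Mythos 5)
Note first that the paper contains no proof of this statement: it is quoted verbatim from \cite{AR}, so there is nothing internal to compare your argument with. Your first implication (shellable $\Rightarrow$ quasi-linear quotients) is correct and complete: the translation $\operatorname{mindeg}\big(\hat{I}_{x_{F_i}}\big)=\min_{k<i}|F_i\setminus F_k|$ is right, the shelling property supplies a facet $F_i\setminus\{v\}$ of $\langle F_1,\dots,F_{i-1}\rangle\cap\langle F_i\rangle$ lying in some earlier $F_k$, and purity plus minimality of the generating set forces the minimal degree to be exactly $1$.

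The converse is where the genuine gap lies, and it is not one that can be closed. As your own translation makes explicit, quasi-linear quotients for a pure complex says only that in some order each facet after the first meets \emph{some} earlier facet in a codimension-one face; this is precisely connectivity of the ridge (dual) graph of $\Delta$, i.e.\ strong connectivity, which is strictly weaker than shellability. For instance, the six-vertex triangulation of the real projective plane is pure of dimension $2$ with connected dual graph, so its facet ideal has quasi-linear quotients in the sense of Definition \ref{qlq}, yet it is not shellable, since shellability would make its face ring Cohen--Macaulay over every field while $\tilde{H}_1\neq 0$ in characteristic $2$; non-shellable triangulated $3$-balls (Rudin, Ziegler) give further counterexamples. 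Hence the interchange/monovariant scheme you sketch cannot succeed in general --- on such complexes every ordering satisfies the quasi-linear condition and none is a shelling --- and the step you flag as "the main obstacle" is in fact an unbridgeable one, which also casts doubt on the cited source and on the way Corollary \ref{frcm} is used in Theorem \ref{CM}. Your fallback route is additionally misstated: the standard Herzog--Hibi--Zheng criterion characterizes shellability via linear quotients of the Stanley--Reisner ideal of the Alexander dual, whose generators are the complements $x_{[n]\setminus F_i}$, not via linear quotients of the facet ideal $\big(x_{F_1},\dots,x_{F_r}\big)$ itself.
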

\begin{cor}\label{frcm}\cite{AR}
\em{The face ring of a pure simplicial complex $\Delta$ over $[n]$ is Cohen Macaulay if and only if $I_{\mathcal{F}}(\Delta)$ has quasi-linear quotients.}
\end{cor}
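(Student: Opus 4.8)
The plan is to obtain Corollary~\ref{frcm} by combining the Theorem above (from \cite{AR}) with classical facts about Stanley--Reisner rings, so that no essentially new argument is required. First I would recall the standard implication that a pure shellable simplicial complex $\Delta$ has a Cohen--Macaulay face ring $k[\Delta]=S/I_{\mathcal{N}}(\Delta)$ over every field $k$ (see e.g.\ \cite{BH,Vi}). Granting this, the ``if'' direction of the corollary is immediate: if $I_{\mathcal{F}}(\Delta)$ has quasi-linear quotients then the Theorem makes $\Delta$ shellable, and since $\Delta$ is pure by hypothesis, $k[\Delta]$ is Cohen--Macaulay.

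The ``only if'' direction is where I expect the real obstacle, because for a general pure complex Cohen--Macaulayness does \emph{not} force shellability, so one cannot simply run the Theorem backwards through shellability. Instead I would characterize the quasi-linear quotient property homologically rather than combinatorially: for the equigenerated squarefree monomial ideal $I_{\mathcal{F}}(\Delta)$ one relates having quasi-linear quotients to having a linear resolution, and then applies the Eagon--Reiner theorem (see \cite{HH}), which says that $k[\Delta]$ is Cohen--Macaulay exactly when the Stanley--Reisner ideal of the Alexander dual $\Delta^{\vee}$ has a linear resolution; transporting this equivalence back along Alexander duality recovers the quasi-linear quotient property of $I_{\mathcal{F}}(\Delta)$. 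Since both the Theorem and this equivalence are attributed to \cite{AR}, the cleanest write-up is simply to invoke \cite{AR} for the whole corollary; the point worth stressing is only that the two equivalences it chains together --- shellable $\Leftrightarrow$ quasi-linear quotients, and quasi-linear quotients $\Leftrightarrow$ Cohen--Macaulay --- have genuine content precisely in the backward directions.

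Finally I would record what the corollary is for: combined with Proposition~\ref{fsc}, which shows that every facet of $\Delta_s(\mathcal{J}_{2,m})$ has $2m$ vertices and hence that $\Delta_s(\mathcal{J}_{2,m})$ is pure, it reduces the Cohen--Macaulayness of $k[\Delta_s(\mathcal{J}_{2,m})]$ to exhibiting a single ordering of the spanning trees in $s(\mathcal{J}_{2,m})$ --- classified in Propositions~\ref{prp6}--\ref{prp8} --- that witnesses quasi-linear quotients of the facet ideal. Thus the corollary is the bridge carrying the combinatorial bookkeeping of Section~3 over to the ring-theoretic conclusion promised in the abstract.
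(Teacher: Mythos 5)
The first thing to say is that the paper contains no proof of Corollary~\ref{frcm} at all: like the Theorem preceding it, it is quoted from \cite{AR} and then used as a black box in the proof of Theorem~\ref{CM}. So the only question is whether your reconstruction is sound. Your ``if'' direction is: quasi-linear quotients of $I_{\mathcal{F}}(\Delta)$ gives shellability by the quoted Theorem, and a pure shellable complex has a Cohen--Macaulay face ring over every field (\cite{BH}); this is also the only direction the paper actually needs in Section~4, and your closing observation that purity of $\Delta_s(\mathcal{J}_{2,m})$ comes from all facets having $2m$ edges matches how the paper uses the corollary.

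The ``only if'' direction, however, has a genuine gap, and it sits exactly where you located the difficulty. Eagon--Reiner (as in \cite{HH}) gives: $k[\Delta]$ is Cohen--Macaulay if and only if the ideal generated by the monomials $x_{[n]\setminus F}$, $F$ a facet of $\Delta$, has a linear resolution; for a pure complex the quotient-type conditions do transfer between this ideal and the facet ideal $I_{\mathcal{F}}(\Delta)$, since $|F_i\setminus F_j|=|F_j\setminus F_i|$, so that part is harmless. What is not supplied is your bridge ``linear resolution $\Leftrightarrow$ quasi-linear quotients'': having (quasi-)linear quotients is an ordering condition on a generating system, strictly weaker than linear quotients, and while linear quotients of an equigenerated ideal implies a linear resolution, no standard result (and nothing cited in this paper) gives any converse from a linear resolution back to quasi-linear quotients. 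Moreover this step cannot be filled in general: if it could, then together with the quoted Theorem it would prove that every pure Cohen--Macaulay complex is shellable, which is false (non-shellable triangulated balls and spheres are Cohen--Macaulay). Indeed the two statements quoted from \cite{AR} jointly assert precisely that equivalence, so at least one of them cannot hold as stated; within this paper the only defensible write-up is the one the authors adopt, namely to cite \cite{AR} for the corollary rather than argue it, and any honest independent proof attempt of the ``only if'' half along your Alexander-duality route will stall at the unproved resolution-versus-quotients equivalence.
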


Here, we present the main result of this section.
\begin{Theorem}\label{CM}
\em{The face ring of $\Delta_s(\mathcal{J}_{2,m})$ is
Cohen-Macaulay.}
\end{Theorem}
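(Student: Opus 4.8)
The plan is to apply Corollary \ref{frcm}. By Proposition \ref{fsc} the complex $\Delta_s(\mathcal{J}_{2,m})$ is pure of dimension $2m-1$, so it is enough to exhibit quasi-linear quotients for its facet ideal $I_{\mathcal{F}}(\Delta_s(\mathcal{J}_{2,m}))\subset k[x_e:e\in E]$, generated by the squarefree monomials $x_T:=\prod_{e\in E(T)}x_e$ over the spanning trees $T$ of $\mathcal{J}_{2,m}$. Since every facet has exactly $2m$ edges, for an ordering $T_1,\dots,T_t$ of $s(\mathcal{J}_{2,m})$ the colon ideal $(x_{T_1},\dots,x_{T_{i-1}}):(x_{T_i})$ contains a variable $x_e$ if and only if some $x_{T_j}$ with $j<i$ divides $x_e\,x_{T_i}$, which, by comparing supports, is equivalent to $E(T_j)=\bigl(E(T_i)\setminus\{f\}\bigr)\cup\{e\}$ for some $f\in E(T_i)$ and $e\notin E(T_i)$. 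Thus what has to be produced is a linear order on $s(\mathcal{J}_{2,m})$ in which every tree after the first is obtained from an earlier one by a single edge swap, i.e.\ a shelling order of $\Delta_s(\mathcal{J}_{2,m})$.

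First I would record the structural observation that makes such an order essentially free: a subset $F\subseteq E$ is a face of $\Delta_s(\mathcal{J}_{2,m})$ exactly when it is contained in some spanning tree, that is, when $F$ is acyclic, so the minimal nonfaces of $\Delta_s(\mathcal{J}_{2,m})$ are precisely the edge sets of the cycles $C_{i_1,\dots,i_k}$ enumerated in Lemma \ref{lema1}. In other words $\Delta_s(\mathcal{J}_{2,m})$ is the independence complex of the graphic matroid of $\mathcal{J}_{2,m}$, and its facets are the bases of that matroid. Now fix any linear order on $E$ (for instance the order of the $e_{ki}$ in eq.~(1)) and order the spanning trees so that $T$ precedes $T'$ whenever the smallest edge of $E(T)\triangle E(T')$ lies in $E(T)$: this is the classical shelling of a matroid (independence) complex, so every spanning tree other than the lexicographically least one is obtained from an earlier one by a single edge swap, hence $I_{\mathcal{F}}(\Delta_s(\mathcal{J}_{2,m}))$ has quasi-linear quotients and Corollary \ref{frcm} gives that $k[\Delta_s(\mathcal{J}_{2,m})]$ is Cohen-Macaulay.

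Should one prefer to stay inside the combinatorial framework of Section 3, the same order can be built by hand from the five disjoint families of spanning trees classified in Propositions \ref{prp6}, \ref{prp7} and \ref{prp8} and collected in Remark \ref{rmk9}, namely $\mathcal{C_J}_{1},\mathcal{C_J}_{2},\mathcal{C_J}_{3a},\mathcal{C_J}_{3b},\mathcal{C_J}_{3c}$: list first the trees of $\mathcal{C_J}_{1}$ in lexicographic order of the removed tuple $(i_1,\dots,i_m)$, then those of $\mathcal{C_J}_{2}$, then $\mathcal{C_J}_{3a},\mathcal{C_J}_{3b}$ and $\mathcal{C_J}_{3c}$, each family refined internally in lexicographic order of its removed-edge set. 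For a tree $T$ not first in the list one adds back a carefully chosen edge $e\notin E(T)$, which closes a unique cycle of $\mathcal{J}_{2,m}$, and instead removes another edge of that cycle so as to reach a tree occurring earlier; Propositions \ref{prp6}--\ref{prp8} then certify that the swapped edge set is again a spanning tree and pin down the family it lies in.

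The main obstacle is exactly this verification in the hand-built version. Because $\mathcal{J}_{2,m}$ carries many overlapping cycles, whose intersections are computed in Propositions \ref{prp2}--\ref{prp5}, checking that the prescribed swap truly lands on an earlier tree --- most delicately when $T$ deletes several common edges $e_{j_{r_1}1},\dots,e_{j_{r_\rho}1}$ and one must decide which of the merged cycles to break and whether the outcome falls in $\mathcal{C_J}_{2}$ or in $\mathcal{C_J}_{3a}$, $\mathcal{C_J}_{3b}$, $\mathcal{C_J}_{3c}$ --- demands a careful case analysis along the interfaces between the five families. The matroid description circumvents all of this, so I would present it as the principal argument and record the explicit shelling only as a remark.
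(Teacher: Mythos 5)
Your proposal is correct, and its core step is genuinely different from the paper's. Both arguments funnel through Corollary \ref{frcm}: one must order the squarefree degree-$2m$ generators $x_T$, $T\in s(\mathcal{J}_{2,m})$, of the facet ideal so that every colon ideal contains a variable, which, as you rightly reformulate, means that every spanning tree after the first must admit an earlier spanning tree obtained from it by a single edge exchange. The paper produces such an order by hand, tailored to $\mathcal{J}_{2,m}$: it lists the generators according to how many of the common edges $e_{11},e_{21},\dots$ appear among the deleted edges (the classification of Propositions \ref{prp6}--\ref{prp8} and Remark \ref{rmk9}) and then argues that a generator indexed by $(11,\dots,(k-1)1,j_ki_k,\dots,j_mi_m)$ is preceded by one differing only in the $k$-th deleted edge, so that $x_{j_ki_k}$ lies in the colon ideal. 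You instead observe that $\Delta_s(\mathcal{J}_{2,m})$ is the independence complex of the graphic matroid (faces are forests, facets are bases) and order the bases lexicographically with respect to any order on $E$; the classical matroid shelling, or even just the basis-exchange property (for a non-lex-least basis $B$ and the lex-least basis $B_0$, add the least element of $B\,\triangle\,B_0$, which lies in $B_0$, and delete an element of the resulting circuit outside $B_0$ to reach a lex-earlier basis at exchange distance one), then gives the required linear generator in each colon. Your route buys generality and rigor: it works verbatim for $\Delta_s(G)$ of any connected graph and sidesteps the delicate case analysis along the interfaces of the five classes of Remark \ref{rmk9}, which is exactly the part the paper's verification passes over most quickly; the paper's route, in exchange, stays entirely inside the combinatorial machinery it develops in Section 3 and exhibits a concrete ordering specific to $\mathcal{J}_{2,m}$. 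Your sketched hand-built ordering of the five classes is essentially the paper's argument, and your diagnosis of where its difficulty lies is accurate.
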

\begin{proof}
By corollary \ref{frcm}, it is sufficient to show that $I_{\mathcal{F}}\big(\Delta_s(\mathcal{J}_{2,m})\big)$ has a quasi-linear quotients in $S=k[x_{11},x_{12},x_{13},x_{21},x_{22},x_{23},x_{31},\hdots,x_{m1},x_{m2},x_{m3}]$. By propositions \ref{prp6}, \ref{prp7}, \ref{prp8} and the remark \ref{rmk9}, we have
$$s(\mathcal{J}_{2,m})=\mathcal{C_J}_{1}\bigcup\mathcal{C_J}_{2}\bigcup\mathcal{C_J}_{3a}\bigcup\mathcal{C_J}_{3b}\bigcup\mathcal{C_J}_{3c}$$
Therefore,
$$ \Delta_s (\mathcal{J}_{2,m})=\Big\langle\hat{E}_{(j_1i_1,j_2i_2,\hdots,j_mi_m)}=E\backslash \{e_{j_1i_1},e_{j_2i_2},\hdots,e_{j_mi_m}\}\mid \hat{E}_{(j_1i_1,j_2i_2,\hdots,j_mi_m)}\in s (\mathcal{J}_{2,m})\Big\rangle$$
and hence we can write,
$$ I_{\mathcal{F}}(\Delta_s(\mathcal{J}_{2,m}))=\Big( x_{\hat{E}_{(j_1i_1,j_2i_2,\hdots,j_mi_m)}}\mid \hat{E}_{(j_1i_1,j_2i_2,\hdots,j_mi_m)}\in s (\mathcal{J}_{2,m})\Big).$$
Here $ I_{\mathcal{F}}(\Delta_s(\mathcal{J}_{2,m}))$ is a pure monomial ideal of degree $2m-1$ with $x_{\hat{E}_{(j_1i_1,j_2i_2,\hdots,j_mi_m)}}$ as the product of all variables in $S$ except $x_{j_1i_1},x_{j_2i_2},\hdots,x_{j_mi_m}$. Now we will show that $ I_{\mathcal{F}}(\Delta_s(\mathcal{J}_{2,m}))$ has qausi-linear quotients with respect to the following generating system:\\
$\{x_{\hat{E}_{(11,21,\hdots,(m-1)1,j_mi_m)}}\mid i_r\neq 1\},\{x_{\hat{E}_{(11,21,\hdots,(m-2)1,(m-1)i_{m-1},j_mi_m)}}\mid i_{m-1}\neq 1\},\\
\{x_{\hat{E}_{(11,21,\hdots,(m-3)1,(m-2)i_{m-2},j_{m-1}i_{m-1},j_mi_m)}}\mid i_{m-2}\neq 1\},\hdots ,\{x_{\hat{E}_{(11,2i_2,j_3i_3\hdots,j_mi_m)}}\mid i_2\neq 1\},\\
\{x_{\hat{E}_{(1i_1,j_2i_2,\hdots,j_mi_m)}}\mid i_1\neq 1\}$\\
Let us put
\\$\begin{array}{c}
    C_{(11,21,\hdots,(m-2)1,(m-1)i_{m-1},j_mi_m)}=\{x_{\hat{E}_{(11,21,\hdots,(m-2)1,(m-1)i_{m-1},j_mi_m)}}\mid i_{m-1}\neq 1\}, \\
    C_{(11,21,\hdots,(m-3)1,(m-2)i_{m-2},j_{m-1}i_{m-1}j_mi_m)}=\{x_{\hat{E}_{(11,21,\hdots,(m-3)1,(m-2)i_{m-2},j_{m-1}i_{m-1},j_mi_m)}}\mid i_{m-1}\neq 1\}, \\
  \vdots \\
   C_{(1i_1,j_2i_2,\hdots,j_mi_m)}=\{x_{\hat{E}_{(1i_1,j_2i_2,\hdots,j_mi_m)}}\mid i_1\neq 1\}.
\end{array}$\\
%$C_{(11,21,\hdots,(r-1)1,j_ri_r)}=\{x_{\hat{E}_{(11,21,\hdots,(r-1)1,j_ri_r)}}\mid i_r\neq 1\}$
 Also for any $C_{(j_1i_1,j_2i_2,\hdots,j_mi_m)}$, denote $\bar{C}_{(j_1i_1,j_2i_2,\hdots,j_mi_m)}$  as the residue collection of all the generators which precedes $C_{(j_1i_1,j_2i_2,\hdots,j_mi_m)}$ in the above order. We will show that
$$(\bar{C}_{(j_1i_1,j_2i_2,\hdots,j_mi_m)}):( x_{\hat{E}_{(j_1i_1,j_2i_2,\hdots,j_mi_m)}})$$
contains atleast one linear generator.\\
Now for any generator $x_{\hat{E}_{(11,\hdots,(k-1)1,j_ki_k,\hdots,j_mi_m)}}$, the above said system of generators guarantee the existence of a generator $x_{\hat{E}_{(11,\hdots,(k-1)1,j_\alpha i_\alpha,j_{k+1}i_{k+1},\hdots,j_mi_m)}}$ in $\bar{C}_{(11,\hdots,(k-1)1,j_ki_k,\hdots,j_mi_m)}$ such that $j_\alpha i_\alpha\neq j_ki_k$. Therefore, by using the definition of colon ideal it is easy to see that
$$(\bar{C}_{(11,\hdots,(k-1)1,j_ki_k,\hdots,j_mi_m)}):(x_{(11,\hdots,(k-1)1,j_ki_k,\hdots,j_mi_m)})$$
contains a linear generator $x_{j_ki_k}$. Hence $I_{\mathcal{F}}(\Delta_s(\mathcal{J}_{2,m}))$ has quasi-linear quotients, as required.

\end{proof}

\section{Conclusions and Scopes}

We conclude this paper with some open scopes aswell as some constraints related to our work.
\begin{itemize}
  \item The results given in this paper can be naturally extended for any integer $n\ge2$.
  \item The scope of SSC of a graph can be explored for some other classes of graphs like the wheel graph $W_n$ etc. However, since finding spanning trees of a general graph is a NP-hard problem, therefore the results given here are not easily extendable for a general class of graph.
  \item In view of the work done in \cite{IB1, IB2}, we intend to find some scopes of the SSC in studying sensor networks.
\end{itemize}

%\begin{Example}
%\em{For the graph $\mathcal{G}_{10,2}^1$ given in Fig. 1.,
%
%the facet ideal of the spanning simplicial complex is:\\
%$$I_{\mathcal{F}}(\Delta_s(\mathcal{G}_{10,2}^1))=(x_{11,21},C_{11,j_2i_2},C_{1i_1,j_2i_2})$$
%where $C_{11,j_2i_2}=x_{11,22},x_{11,23},x_{11,12},x_{11,13}$ and $C_{1i_1,j_2i_2}=x_{12,21},x_{12,22},x_{12,23},x_{13,21},x_{13,22},x_{13,23}.$
%It is easy to see that, $I_{\mathcal{F}}(\Delta_s(\mathcal{G}_{10,2}^1))$ has quasi-linear quotients with respect to the ordering given to its generators (by applying above theorem). Hence the face ring of $\Delta_s(\mathcal{G}_{10,2}^1)$ is Cohen Macaulay.}
%
%\end{Example}

 \vspace{1 pt}

\end{document}